\DeclareSymbolFont{matha}{OML}{txmi}{m}{it}
\DeclareMathSymbol{\varv}{\mathord}{matha}{118}
\numberwithin{equation}{section}
\theoremstyle{plain}
\newtheorem{thm}{Theorem}[section]
\newtheorem{lem}[thm]{Lemma}
\newtheorem{prop}[thm]{Proposition}
\newtheorem*{cor}{Corollary}
\theoremstyle{definition}
\newtheorem{defn}{Definition}[section]
\newtheorem{exmp}{Example}[section]
\theoremstyle{remark}
\newtheorem*{rem}{Remark}
\newcommand{\interior}[1]{%
  {\kern0pt#1}^{\mathrm{o}}%
}
\begin{document}
\title{A Variance Inequality for Meromorphic Functions under Exterior Probability}
\author{Swagatam Sen}
\date{July 1, 2020}

\maketitle
\begin{abstract}
The problem of measuring an unbounded system attribute near a singularity has been discussed. Lenses have been introduced as formal objects to study increasingly precise measurements around the singularity and a specific family of lenses called Exterior probabilities have been investigated. It has been shown that under such probabilities, measurement variance of a measurable function around a 1st order pole on a complex manifold, consists of two separable parts - one that decreases with diminishing scale of the lenses, and the other that increases. It has been discussed how this framework can lend mathematical support to ideas of non-deterministic uncertainty prevalent at a quantum scale. In fact, the aforementioned variance decomposition allows for a minimum possible variance for such a system irrespective of how close the measurements are. This inequality is structurally similar to Heisenberg uncertainty relationship if one considers energy/momentum to be a meromorphic function of a complex spacetime.
\end{abstract}

\section{Introduction}
Emergence of discrete outcome choices upon measurement of a continuously evolving intrinsic system has been a longstanding topic of interest. It is particularly of interest in the context of quantum uncertainty which appears to be fundamental to reality and is most often characterised by the continuous wave function. However, upon measurement such a continuous system collapses to discrete outcome choices. 

There has been substantial amount of work on developing a mathematical foundation for such fundamental uncertainties and evolution/collapse of continuous systems onto discrete `pure states'. Almost all of it treats measurements as operators on a Hilbert space. Born rule, put forward by Max Born in 1926 \cite{wheeler1983quantum}, proposed an interpretation of the wave function wherein the square of its amplitude represents the probability of measurement outcome in a quantum experiment. This became a core part of famous Copenhagen Interpretation of Quantum Mechanics. While it has been deemed the most satisfactory and certainly most popular interpretation for a long time, it poses a serious mathematical challenge as it remains at crossroads with classical framework of Probability Measures.

One of the most significant mathematical groundwork towards addressing this challenge was conducted by Gleason \cite{Gle}. Subsequent generalisations led to the formation of  Quantum probability via non-commutative *-algebras\cite{qp1}\cite{qp2}\cite{neumann1955mathematical}. However, the approach merely calls to focus how probabilities work differently at quantum scale, but doesn't reconcile the classical and quantum probabilities under a general framework that applies at all scales. In that sense, it harbours much of the same concerns around non-emergent nature of quantum mechanics in general. Some of the more recent works in the area\cite{belavkin2005dynamical}\cite{belavkin2005reconstruction} focuses on measurement as an `averaging' operation, albeit in the operator space.

In contrast to that, our primary concern in this paper is to investigate the possibility of a classical probability measure reproducing some of the known quantum effects. Also the approach can be described as a geometric one as opposed to the standard Operator theoretic approach. Motivationally it can be compared with the work of Fuch \cite{fuchs2010qbism} on Quantum Bayesianism. 

Particularly within the scope of this paper, we would focus on the nature of the underlying uncertainty as described by Heisenberg Uncertainty principle, and how such a relation can emerge from a classical probability measure. Heisenberg Uncertainty principle, despite multiple different representations, fundamentally is a probabilistic statement around the limits on accuracy of measurement of certain attributes at a quantum scale\cite{hsnbrg}. For example, if we allow $\Delta x$ and $\Delta \mu$ to be the standard error in measurement of position and momentum of a point in spacetime, then 
\begin{equation*}
\Delta x \Delta \mu \geq \frac{\hbar}{2}=const
\end{equation*}
A similar relation exists between other conjugate pairs e.g. time and Energy etc. 

To rigorously define these relationships, we would need to introduce the concept of `lenses'. 

\begin{defn}
Let $\mathscr{M}$ be an arbitrary manifold, and $\mathscr{L}=\{\Omega_\lambda, \Sigma_\lambda, P_\lambda\}_{\lambda\geq 0}$ be an indexed family of probability spaces on $\mathscr{M}$ such that $\Omega_\lambda \subset \Omega_{\lambda'}$ iff $\lambda < \lambda'$ and $\bigcap\limits_{\lambda} \Omega_\lambda = \{p\}$ for some $p \in \mathscr{M}$. $\mathscr{L}$ would be called a system of lenses around $p$ and each of the underlying probability spaces would be called a lens. $p$ would be called the focus of the system of lenses.
\end{defn}

Now a conjugate pair of attributes can be represented by $f : \mathscr{M} \mapsto \mathbb{C}^n$ where $\mathscr{M}$ is the manifold representing the domain for one of the pair of attributes, while $f$ denotes the relationship between the two. 

\begin{exmp}
Let $z=x+it$ be a coordinate on $\mathbb{C}$ with $x$ and $t$ representing the space and time coordinates respectively. Let $\pi : \mathbb{C} \mapsto \mathbb{C}$ be the complex momentum such that $\pi(z)=p(x,t) + iE(x,t)$. In this case $(z,\pi)$ represents a conjugate pair of attribute with $\mathscr{M}=\mathbb{C}$
\end{exmp}

 Let's choose lense $\mathscr{L}$ on $\mathscr{M}$ with scale index $\lambda$ such that $f$ is measurable $\forall \lambda$. That would mean we can safely talk about expectation $E_\lambda$ and standard deviation $\sigma_\lambda$ of $f$ measured through individual lenses.

\begin{defn}
Let $\mathscr{L}$ be a system of lenses on $\mathscr{M}$ with scale index $\lambda$ and $f:\mathscr{M} \mapsto \mathbb{C}^n$ is measurable $\forall \lambda$. Then $f$ is called Detectable if $E_\lambda(f)$ is independent of scale $\lambda$ and $\sigma_\lambda(f) < \infty, \forall \lambda$.
\end{defn}

Without loss generality for any detectable $f$, we would assume the expectation to be $0$ unless mentioned otherwise. 

For such a Detectable conjugate pair, Heisenberg-type uncertainty relation can be summarised as follows.
\begin{equation}\label{target}
\lambda \sigma_\lambda (f) \geq const
\end{equation}

If $f$ is continuous in an open neighbourhood $U$ of $p$ then $f|_U$ is bounded. In that case $\sigma_\lambda(f) \leq \sup\limits_U f|_U < \infty$ would also be universally bounded and would clearly not satisfy desired relationships as in (\ref{target}).  

That implies that for relationships of type (\ref{target}) to hold around a point $p$, we must have a singularity of $f$ at $p$. However, it is immediately clear that if we allow $P_\lambda$ to be absolutely continuous (w.r.t Lebesgue measure on $\mathbb{C}^n$), then $f$ can't be detectable as $\sigma_\lambda=\infty, \forall \lambda$. Consequently our search for a system of lenses that allow detectable functions to satisfy Heisenberg type relations, has to limit itself to certain types of probability measures that allows an open null set containing $p, \forall \lambda$. We'd refer these probabilities as Exterior Probability.

In the next few sections we would build a system of lenses on a complex manifold and study standard deviation of a detectable function $f$ under Exterior probabilities. 

\section{Exterior Probability}
We'll start with a simple disc of arbitrary length on $\mathbb{C}$, $D_\lambda=\{w \in \mathbb{C} \mid |w| \leq \lambda\}$. 

\begin{defn}

For a given open interval $I \subset [-\pi,\pi]$, we can define a Slice as $\Gamma_{\lambda}(I)=\{w \in \mathbb{C} \mid |w| \leq \lambda, arg(w) \in I\}$. Let $\gamma_{\lambda}(I)=\{w \in \mathbb{C} \mid |w| = \lambda, arg(w) \in I\}$ be the corresponding arc. \qed
\end{defn}
It's easy to check that both Slices and Arcs can be seen as distributive operators on the semi-ring of intervals.
\begin{rem}
\begin{enumerate}
\item{
$\Gamma_\lambda(I_1)\cap \Gamma_\lambda(I_2) = \Gamma_\lambda(I_1\cap I_2), \text{    } \gamma_\lambda(I_1)\cap \gamma_\lambda(I_2) = \gamma_\lambda(I_1\cap I_2)$
}
\item{
$\Gamma_\lambda(\bigcup\limits_{k=1}^\infty I_k) = \bigcup\limits_{k=1}^n \Gamma_\lambda(I_k), \text{     } \gamma_\lambda(\bigcup\limits_{k=1}^\infty I_k) = \bigcup\limits_{k=1}^n \gamma_\lambda(I_k)$
}
\item{
$\Gamma_\lambda(\bigsqcup\limits_{k=1}^\infty I_k) = \bigsqcup\limits_{k=1}^n \Gamma_\lambda(I_k), \text{     } \gamma_\lambda(\bigsqcup\limits_{k=1}^\infty I_k) = \bigsqcup\limits_{k=1}^n \gamma_\lambda(I_k)$
}
\end{enumerate}
\end{rem}
Space of all Slices of $D_\lambda$ renders a semi-ring structure. 
\begin{defn}
Let $E_\lambda= \{\Gamma_{\lambda}(I) \mid \forall \text{ interval }I \subset [-\pi,\pi] \}$ be the collection of Slices on $D_\lambda$. \qed
\end{defn}
\begin{prop}
$E_\lambda$ is a semi-ring, $\forall \lambda$ 
\end{prop}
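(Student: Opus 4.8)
The plan is to reduce the statement to the well-known fact that the family $\mathcal{I}$ of all subintervals of $[-\pi,\pi]$ is itself a semi-ring, and then to transport that structure along the slice operator $\Gamma_\lambda$, using precisely the distributivity identities recorded in the Remark. Recall that a nonempty collection $\mathcal{S}$ of subsets of a fixed set is a semi-ring when $\emptyset \in \mathcal{S}$, when $\mathcal{S}$ is closed under pairwise intersection, and when for all $A,B \in \mathcal{S}$ the difference $A\setminus B$ can be written as a finite disjoint union of members of $\mathcal{S}$. I would verify these three properties for $E_\lambda$ one at a time.

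The first two are immediate from the Remark. Since the empty interval is an interval, $\emptyset = \Gamma_\lambda(\emptyset) \in E_\lambda$. Given two slices $\Gamma_\lambda(I_1), \Gamma_\lambda(I_2) \in E_\lambda$, item (1) of the Remark gives $\Gamma_\lambda(I_1) \cap \Gamma_\lambda(I_2) = \Gamma_\lambda(I_1 \cap I_2)$, and $I_1 \cap I_2$ is again a subinterval of $[-\pi,\pi]$, so the intersection lies in $E_\lambda$.

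The only genuine work is the difference condition. First I would establish the pointwise identity $\Gamma_\lambda(I_1) \setminus \Gamma_\lambda(I_2) = \Gamma_\lambda(I_1 \setminus I_2)$: a point $w$ with $0 < |w| \le \lambda$ lies in the left-hand side iff $\arg(w) \in I_1$ and $\arg(w) \notin I_2$, that is, iff $\arg(w) \in I_1 \setminus I_2$. (The origin, where $\arg$ is undefined, and the seam at $\arg = \pm\pi$ must be disposed of by an explicit convention — say $p=0$ belongs to $\Gamma_\lambda(I)$ exactly when $I \neq \emptyset$, and $\pm\pi$ are identified so no arc wraps around — chosen so that the identity, and indeed all three identities of the Remark, hold verbatim.) Next, since $I_1$ and $I_2$ are intervals, $I_1 \setminus I_2$ is empty, a single subinterval, or a disjoint union $J_1 \sqcup J_2$ of two subintervals of $[-\pi,\pi]$. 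Applying item (3) of the Remark with $n=2$ yields $\Gamma_\lambda(I_1 \setminus I_2) = \Gamma_\lambda(J_1) \sqcup \Gamma_\lambda(J_2)$, a disjoint union of at most two members of $E_\lambda$, which is exactly the required difference property. For the degenerate case $\lambda = 0$ the disc collapses to $\{p\}$ and $E_0 = \{\emptyset, \{p\}\}$ is trivially a semi-ring.

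The step I expect to demand the most care is the bookkeeping at the boundary: one must allow open, closed, and half-open intervals in $\mathcal{I}$ so that $I_1 \setminus I_2$ stays inside the family, and one must fix the conventions for $\arg(0)$ and for the endpoints $\pm\pi$ so that the displayed distributivity identities are literal set equalities rather than equalities up to a measure-zero boundary. Once those conventions are in place, the remainder is a direct transcription of the semi-ring axioms for intervals through $\Gamma_\lambda$.
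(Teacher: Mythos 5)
Your proof is correct and takes essentially the same route as the paper's: it transports the semi-ring structure of the intervals of $[-\pi,\pi]$ through the slice operator $\Gamma_\lambda$, using closure under intersection and the decomposition of $I_1\setminus I_2$ into finitely many disjoint intervals. You are in fact more careful than the paper's terse argument, which glosses over the conventions at the origin (where $\arg$ is undefined) and at the seam $\pm\pi$ that you rightly flag as the only delicate point.
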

\begin{proof}
Proof follows trivially from the fact that the space of all intervals $I$, forms a semi-ring on $[-\pi,\pi]$.\\
Let $\Gamma_\lambda(I_1)$ and $\Gamma_\lambda(I_2)$ be two Slices. It's easy to see that, $\Gamma_\lambda(I_1) \cap \Gamma_\lambda(I_2) = \Gamma_\lambda(I_1 \cap I_2) \in E_\lambda$.
Also, $\Gamma_\lambda(I_1) \setminus \Gamma_\lambda(I_2) = \Gamma_\lambda(I_1 \setminus I_2) = \Gamma_\lambda(\bigcup\limits_k C_k)= \bigcup\limits_k \Gamma_\lambda(C_k)$, where $\{C_k\}_{k}$ are a collection of intervals.
\end{proof}

Of course then we can extend it to the $\sigma$-algebra it generates.

\begin{defn}
Let $\mathscr{E}_\lambda=\sigma(E_\lambda)$ be the $\sigma$-algebra generated by $E_\lambda$. \qed
\end{defn}

Now we can start to build the measure, first on the semi-ring.
\begin{defn}
Let $\Gamma_{\lambda}(I) \in E_\lambda$. Then $\mu_\lambda : E_\lambda \mapsto \mathbb{C}$ can be defined as $\mu_\lambda(\Gamma_\lambda(I)) = \frac{1}{2\pi i}\oint\limits_{\gamma_{\lambda}(I)} \frac{1}{w}$ \qed
\end{defn}

\begin{lem}
$\mu_\lambda$ is $\sigma$-additive on $E_\lambda$
\end{lem}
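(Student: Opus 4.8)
The plan is to reduce the claimed $\sigma$-additivity of $\mu_\lambda$ on the semi-ring $E_\lambda$ to the corresponding statement for arc-length (or rather, normalized angular measure) on intervals of $[-\pi,\pi]$, by computing the contour integral explicitly. First I would parametrize the arc $\gamma_\lambda(I)$ for $I=(a,b)$ by $w=\lambda e^{i\theta}$, $\theta\in(a,b)$, so that $dw = i\lambda e^{i\theta}\,d\theta$ and hence $\frac{1}{2\pi i}\oint_{\gamma_\lambda(I)}\frac{dw}{w} = \frac{1}{2\pi i}\int_a^b \frac{i\lambda e^{i\theta}}{\lambda e^{i\theta}}\,d\theta = \frac{b-a}{2\pi}$. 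This identifies $\mu_\lambda(\Gamma_\lambda(I))$ with $\frac{1}{2\pi}\,\mathrm{Leb}(I)$, the normalized Lebesgue measure of the underlying interval $I$, independent of $\lambda$. In particular $\mu_\lambda$ is real-valued and nonnegative on $E_\lambda$ despite the $\mathbb{C}$-valued codomain in the definition.

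Next I would invoke the distributivity relations recorded in the Remark: if $\Gamma_\lambda(I) = \bigsqcup_{k=1}^\infty \Gamma_\lambda(I_k)$ is a disjoint decomposition within $E_\lambda$, then necessarily $I = \bigsqcup_{k=1}^\infty I_k$ is a disjoint decomposition of intervals (two slices are disjoint precisely when their argument-intervals are, up to the measure-zero ambiguity at the radial boundary, which contributes nothing). Then $\sigma$-additivity of $\mu_\lambda$ becomes exactly the identity $\frac{1}{2\pi}\mathrm{Leb}\bigl(\bigsqcup_k I_k\bigr) = \sum_k \frac{1}{2\pi}\mathrm{Leb}(I_k)$, which is just countable additivity of Lebesgue measure on the semi-ring of intervals — a standard fact. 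I would also remark that finite additivity follows from the same computation together with the additivity of the Riemann integral over concatenated intervals, and that the passage to the countable case is where one genuinely uses the measure-theoretic (rather than merely calculus-level) additivity of $\mathrm{Leb}$.

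The only real subtlety — and the step I would be most careful about — is bookkeeping at the endpoints: a countable disjoint union of open intervals $I_k$ whose union is an interval $I$ will typically miss a countable set of points (the shared endpoints), so $\Gamma_\lambda(I)\setminus\bigsqcup_k\Gamma_\lambda(I_k)$ is a countable union of radial segments. Since each radial segment $\{w : \arg w = \theta_0,\ |w|\le\lambda\}$ has $\mu_\lambda$-measure zero (it is $\Gamma_\lambda$ of a degenerate interval, and the arc $\gamma_\lambda$ of a point is a single point, on which the contour integral vanishes), this defect is $\mu_\lambda$-null and does not affect the sum. I would phrase this cleanly by noting that $\mu_\lambda$ on $E_\lambda$ is, via the parametrization above, the pushforward of $\tfrac{1}{2\pi}\mathrm{Leb}|_{[-\pi,\pi]}$ under $\theta\mapsto\Gamma_\lambda$, and $\sigma$-additivity is inherited from the target. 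Everything else is routine.
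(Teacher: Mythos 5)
Your proof is correct, and it is worth noting how it relates to the paper's own argument. The paper proceeds formally: it uses the distributivity of $\Gamma_\lambda$ and $\gamma_\lambda$ over disjoint unions to rewrite $\mu_\lambda\bigl(\bigsqcup_k \Gamma_\lambda(I_k)\bigr)$ as $\frac{1}{2\pi i}\oint_{\bigsqcup_k \gamma_\lambda(I_k)}\frac{1}{w}$ and then simply asserts that the contour integral over a countable disjoint union of arcs is the sum of the integrals over the individual arcs. That last step is the entire content of the lemma, and the paper does not justify it. Your route supplies exactly the missing justification: the explicit parametrization $w=\lambda e^{i\theta}$ identifies $\mu_\lambda(\Gamma_\lambda(I))$ with $\frac{1}{2\pi}\mathrm{Leb}(I)$, after which $\sigma$-additivity is inherited from countable additivity of Lebesgue measure on the semi-ring of intervals — a genuinely established fact rather than an analogous unproved one. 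Your computation also makes transparent something the paper leaves implicit, namely that $\mu_\lambda$ is real, nonnegative, independent of $\lambda$, and normalized to total mass $1$, which is what makes the Corollary (unique extension to a probability on $\mathscr{E}_\lambda$) an application of Carath\'eodory's theorem. Your endpoint bookkeeping is also a real improvement: since slices are defined via \emph{open} intervals, a nontrivial countable disjoint decomposition of an interval necessarily omits a countable set of angles, and observing that the corresponding radial segments are $\mu_\lambda$-null is needed for the decomposition hypothesis to be non-vacuous; the paper's proof silently treats $\bigsqcup_k I_k$ as literally equal to an interval. In short, same strategy (reduce to additivity over angular intervals), but your version closes the two gaps in the paper's write-up.
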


\begin{proof}
Let $\Gamma = \bigsqcup\limits_{k=1}^{\infty} \Gamma_\lambda(I_k) \in E_\lambda$. That means $\bigsqcup\limits_{k=1}^\infty I_k$ is an interval. That would allow us to write $\mu_\lambda$ as,
\begin{equation*}
\begin{aligned}
\mu_\lambda(\Gamma) &= \mu_\lambda(\bigsqcup\limits_{k=1}^{\infty} \Gamma_\lambda(I_k))   = \mu_\lambda(\Gamma_\lambda(\bigsqcup\limits_{k=1}^\infty I_k))
\\ &=\frac{1}{2\pi i}\oint\limits_{\gamma_\lambda(\bigsqcup\limits_{k=1}^\infty I_k)} \frac{1}{w}  = \frac{1}{2\pi i}\oint\limits_{\bigsqcup\limits_{k=1}^{\infty} (\gamma_\lambda(I_k) )} \frac{1}{w} \\ &= \sum\limits_{k=1}^{\infty} \frac{1}{2\pi i}\oint\limits_{\gamma_\lambda(I_k)} \frac{1}{w}  = \sum\limits_{k=1}^{\infty} \mu_\lambda(\Gamma_\lambda(I_k))
\end{aligned}
\end{equation*}
\end{proof}

\begin{cor}
$\mu_\lambda$ can be uniquely extended as a complex probability on $\mathscr{E}_\lambda$. \qed
\end{cor}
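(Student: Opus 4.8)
The plan is to appeal to the Carathéodory--Hahn--Kolmogorov extension theorem, preceded by an explicit evaluation of the defining integral that isolates the structural features of $\mu_\lambda$ that the theorem consumes. Parametrising the arc $\gamma_\lambda(I)$ by $w=\lambda e^{i\theta}$ with $\theta\in I$ gives $\frac{dw}{w}=i\,d\theta$, hence
\begin{equation*}
\mu_\lambda(\Gamma_\lambda(I))=\frac{1}{2\pi i}\oint\limits_{\gamma_\lambda(I)}\frac{dw}{w}=\frac{|I|}{2\pi}.
\end{equation*}
Thus $\mu_\lambda$ is in fact real, nonnegative and finite on $E_\lambda$, and $\mu_\lambda(D_\lambda)=\mu_\lambda(\Gamma_\lambda([-\pi,\pi]))=\frac{2\pi}{2\pi}=1$; so, despite the ``complex'' terminology of the statement, $\mu_\lambda$ is an honest finite premeasure of unit mass, and the corollary reduces to the classical extension of a $\sigma$-additive nonnegative premeasure from a semi-ring to the generated $\sigma$-algebra.

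First I would pass from the semi-ring $E_\lambda$ to the ring $\mathcal{R}_\lambda$ it generates, namely the finite disjoint unions $\bigsqcup_{k=1}^{n}\Gamma_\lambda(I_k)$, by setting $\mu_\lambda\!\big(\bigsqcup_k\Gamma_\lambda(I_k)\big):=\sum_k\mu_\lambda(\Gamma_\lambda(I_k))$. This is well defined because any two such representations admit a common refinement inside $E_\lambda$ and $\mu_\lambda$ is additive there, and the resulting set function is finitely additive on $\mathcal{R}_\lambda$. The $\sigma$-additivity proved in the preceding Lemma (a countable disjoint union of slices whose union is again a slice) upgrades, by the standard semi-ring-to-ring argument --- decompose a countable disjoint union lying in $\mathcal{R}_\lambda$ into countably many disjoint members of $E_\lambda$ and regroup --- to $\sigma$-additivity of $\mu_\lambda$ on $\mathcal{R}_\lambda$.

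Next I would run Carathéodory's construction: put $\mu_\lambda^{*}(A)=\inf\big\{\sum_k\mu_\lambda(\Gamma_\lambda(I_k)) : A\subseteq\bigcup_k\Gamma_\lambda(I_k)\big\}$ for $A\subseteq D_\lambda$, verify countable subadditivity, show every slice is $\mu_\lambda^{*}$-measurable so that the $\sigma$-algebra of $\mu_\lambda^{*}$-measurable sets contains $\sigma(E_\lambda)=\mathscr{E}_\lambda$, and note that $\mu_\lambda^{*}$ agrees with $\mu_\lambda$ on $E_\lambda$ precisely because $\mu_\lambda$ is $\sigma$-additive there. The restriction of $\mu_\lambda^{*}$ to $\mathscr{E}_\lambda$ is then a measure extending $\mu_\lambda$ with $\mu_\lambda^{*}(D_\lambda)=1$, i.e. a probability. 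Uniqueness is Dynkin's $\pi$--$\lambda$ theorem: $E_\lambda$ is a $\pi$-system (closed under intersection, by the Remark) generating $\mathscr{E}_\lambda$, and any two extensions are finite measures agreeing on $E_\lambda$ and on $D_\lambda$, hence on all of $\mathscr{E}_\lambda$. For readers who wish to keep the complex viewpoint literally, one splits $\mu_\lambda$ into real and imaginary parts and each into its Jordan decomposition, applies the above to the finitely many nonnegative finite pieces, and recombines; the evaluation in the first paragraph shows all but one piece vanishes, so this detour is in fact unnecessary.

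The load-bearing step is the contour computation certifying finiteness (and, here, nonnegativity with unit mass); after that, everything is the textbook Hahn--Kolmogorov machinery. The one point needing a little care is confirming that the $\sigma$-additivity transported to the ring $\mathcal{R}_\lambda$ is exactly what feeds Carathéodory's theorem --- both for the countable subadditivity of $\mu_\lambda^{*}$ and for the identity $\mu_\lambda^{*}=\mu_\lambda$ on $E_\lambda$ --- since the Lemma as stated only covers disjoint unions of slices that again form a slice. This is routine once spelled out, and I expect no obstruction beyond it.
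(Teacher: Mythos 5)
Your proof is correct and follows exactly the route the paper leaves implicit: the corollary is stated with no proof at all, as an immediate consequence of the preceding $\sigma$-additivity lemma via the standard Carath\'eodory--Hahn extension from a semi-ring together with uniqueness from the $\pi$--$\lambda$ theorem, which is precisely the machinery you spell out. Your explicit evaluation $\mu_\lambda(\Gamma_\lambda(I))=\frac{|I|}{2\pi}$ is a worthwhile addition the paper omits: it shows the premeasure is in fact real, nonnegative and of unit mass (normalized angular measure), so the classical extension theorem applies verbatim and no complex-measure (Jordan decomposition / bounded variation) detour is required.
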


Let $(\mathscr{E}^n_\lambda, \mu^n_\lambda)$ denote the product probability space on $D^n_\lambda$ where $\mu^n_\lambda(\varprod \Gamma_k) = \prod\limits_k \mu_\lambda(\Gamma_k)$ for $\Gamma_k \in \mathscr{E}_\lambda, \forall k$. We'd refer to $\mu^n_\lambda$ as the Exterior Probability on the poly-disc $D^n_\lambda$. We'd define a Unit Lens as the filtration of probability spaces $\mathscr{L} = \{(D^n_\lambda, \mathscr{E}^n_\lambda, \mu^n_\lambda)\}_{\lambda \downarrow 0}$.

\begin{defn}
A Convergent Lens on $\mathbb{C}^n$ around $0$, is a filtration of probability spaces $\{D^n_\lambda, \mathscr{E}^n_\lambda, \tau_\lambda)\}_{\lambda \downarrow 0}$ such that $\tau_\lambda$ is absolutely continuous with respect to $\mu^n_\lambda$, $\forall \lambda$. \qed
\end{defn}

For the rest of this paper we'd focus on Unit Lenses. However, similar results can be recovered for a more general convergent lenses.

\section{Variance Inequality for Meromorphic Functions}
Let $\mathscr{F}$ be the vector space of all meromorphic function $f : \mathbb{C}^n \mapsto \mathbb{C}^k$ which has a potential pole of order at most 1 at $0$ and is $\mu^n_\lambda$-measurable over $D^n_\lambda$. 

\begin{rem}
$\mu^n_\lambda$ induces an Expectation $E(\cdot)$ and an inner product $\langle \cdot , \cdot \rangle$ on $\mathscr{F}$ as 
\begin{itemize}
\item{$E(f)=\int\limits_{D_\lambda} f d\mu^n_\lambda=\frac{1}{(2\pi i)^n}\oint\limits_{D_\lambda}\frac{f(w)}{\prod\limits_{\alpha}w_\alpha}$,}
\item{$\langle f,g \rangle = \int\limits_{D_\lambda} f^*g d\mu^n_\lambda= \frac{1}{(2\pi i)^n}\oint\limits_{D_\lambda}\frac{f(w)^*g(w)}{\prod\limits_{\alpha}w_\alpha} $}
\end{itemize}
\end{rem}

\begin{rem}
We can write $f$ as a sum of its core, principal and analytical components. 
\begin{itemize}
\item{$f_0^\alpha = \frac{1}{(2\pi i)^n}\oint\limits_{D_\lambda}{\frac{f^\alpha(\mathbf{w})}{\prod_{\mu=1}^{3} w^\mu}} $ is the core of $f$.}
\item{$f_P^\alpha =  \sum_{\beta=1}^{n} \frac{\eta^{\alpha}_\beta}{z_\beta}$ is the principal component of $f$ where $\eta^\alpha_\beta = \frac{1}{(2\pi i)^n}\oint\limits_{D_\lambda}{\frac{f^\alpha(\mathbf{w})}{\prod_{\mu\ne\beta} w_\mu}}$ is the matrix of residues. }
\item{$f_A = f - f_0 - f_P$ is the Analytic component of $f$ and can be expressed as a power series within a radius of convergence $R$ i.e. $f_A^\alpha =  \frac{1}{(2\pi i)^n}\sum_{\beta=1}^{n} [\oint\limits_{D_\lambda}{\frac{f^\alpha(\mathbf{w})}{w_\beta\prod_{\mu} w_\mu}} ] z^\beta + P^\alpha(z) = \sum_{\beta=1}^{n} \mathscr{D}^{\alpha\beta}z_\beta + P^\alpha(z)$,  $\forall z$ such that $|z^\beta| < R$  $\forall \beta$ where $\mathscr{D}^{\alpha\beta}=\frac{\partial f^\alpha}{\partial w_\beta}|_{w=0}$ and $P^\alpha$ is a power series of degree at least 2}
\end{itemize}
\end{rem}

\begin{rem}
For $f \in \mathscr{F}$, let $\eta$ be the residue matrix and $f_0$ be the core. Then
\begin{enumerate}
\item{$E(f)=f_0$ is independent of $\lambda$}
\item{$\langle \overline{z},f \rangle = \lambda^2 \langle \frac{1}{z},f \rangle = Tr(\eta)$}
\item{$\langle z,f \rangle = \lambda^2 \langle \frac{1}{\overline{z}},f \rangle = Tr(\mathscr{D})$}
\end{enumerate}
\end{rem}

Additionally we can characterise the higher order terms in $f_A$

\begin{prop} \label{prop1}
Let $f \in \mathscr{F}$ and let $f_A$ be the analytic component of $f$. Also let $f_A^\alpha(\mathbf{w})= \mathscr{D}^{\alpha\beta}w_\beta + P^\alpha(\mathbf{w}), \forall \alpha$ where $P^\alpha$ is a power series of degree at least 2. Then the following statements are true
\begin{enumerate}
\item{$\oint\limits_{D_\lambda} \frac{P^\alpha(\mathbf{w})}{\prod\limits_{\gamma}w_\gamma} = \oint\limits_{D_\lambda} \frac{\overline{P^\alpha(\mathbf{w})}}{\prod\limits_{\gamma}w_\gamma} = 0 \text{       }\forall \alpha.$}
\item{$\oint\limits_{D_\lambda} \frac{P^\alpha(\mathbf{w})}{\prod\limits_{\gamma \ne \delta}w_\gamma} = \oint\limits_{D_\lambda} \frac{\overline{P^\alpha(\mathbf{w})}}{\prod\limits_{\gamma \ne \delta}w_\gamma} = 0 \text{            }\forall \alpha, \delta.$}
\item{$\oint\limits_{D_\lambda} \frac{P^\alpha(\mathbf{w})}{w_\delta\prod\limits_{\gamma \ne \delta}w_\gamma} = \oint\limits_{D_\lambda} \frac{\overline{P^\alpha(\mathbf{w})}}{w_\delta \prod\limits_{\gamma}w_\gamma} = 0 \text{            }\forall \alpha, \delta.$}
\item{$\oint\limits_{D_\lambda} \frac{(\overline{P^{\alpha}(\mathbf{w})})P^{\alpha}(\mathbf{w})}{\prod\limits_{\gamma}w_\gamma} = \sum\limits_{i,j} \oint\limits_{D_\lambda} \frac{(\overline{P^{\alpha}_i(\mathbf{w})})P^{\alpha}_j(\mathbf{w})}{\prod\limits_{\gamma}w_\gamma}  = 0 \text{             }\forall \alpha$}
\end{enumerate}
\end{prop}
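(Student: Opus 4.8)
The plan is to reduce all four assertions to one elementary identity and some multi-index bookkeeping. The exterior probability $\mu^n_\lambda$ is carried by the distinguished boundary torus $\mathbb{T}_\lambda=\{\,|w_1|=\cdots=|w_n|=\lambda\,\}$, so that for every $\mathbf{m}=(m_1,\dots,m_n)\in\mathbb{Z}^n$, writing $\mathbf{w}^{\mathbf{m}}:=\prod_\gamma w_\gamma^{m_\gamma}$,
\[
\oint_{D_\lambda}\frac{\mathbf{w}^{\mathbf{m}}}{\prod_\gamma w_\gamma}
=\prod_{\gamma=1}^{n}\oint_{|w_\gamma|=\lambda}w_\gamma^{\,m_\gamma-1}\,dw_\gamma
=\begin{cases}(2\pi i)^n,&\mathbf{m}=\mathbf{0},\\[2pt]0,&\text{otherwise,}\end{cases}
\]
since $\oint_{|w_\gamma|=\lambda}w_\gamma^{s}\,dw_\gamma$ vanishes unless $s=-1$. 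In other words, against the weight $1/\prod_\gamma w_\gamma$ the contour integral annihilates every monomial except the constant. Since $P^\alpha$ — and with it $\overline{P^\alpha}$, again a power series of total degree at least $2$ — converges absolutely and uniformly on $\mathbb{T}_\lambda$ as soon as $\lambda$ is below its radius of convergence $R$, its expansion may be integrated term by term, and each claimed identity becomes the assertion that the constant monomial is absent from the relevant series.

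First I would fix $0<\lambda<R$ and write $P^\alpha(\mathbf{w})=\sum_{|\mathbf{k}|\ge2}c_{\mathbf{k}}\,\mathbf{w}^{\mathbf{k}}$ with $k_\gamma\ge0$ and $|\mathbf{k}|:=\sum_\gamma k_\gamma\ge2$. Dividing by $\prod_{\gamma\in S}w_\gamma$ over a subset $S\subseteq\{1,\dots,n\}$ shifts every exponent vector by $-\mathbf{1}_S$, and an extra factor $1/w_\delta$ shifts it by a further $-e_\delta$; by the displayed identity a term contributes to the integral only when its shifted exponent equals $\mathbf{0}$. Reading off the integrands in parts (1)--(3), this forces $\mathbf{k}$ to be one of $\mathbf{0}$, $e_\delta$, or $-e_\delta$, and none of these is compatible with $k_\gamma\ge0$ together with $|\mathbf{k}|\ge2$; hence each of those integrals vanishes, and the same bookkeeping settles the $\overline{P^\alpha}$ versions.

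Part (4) I would do in two moves. Writing $P^\alpha=\sum_{i\ge2}P^\alpha_i$ as a sum of homogeneous components and expanding bilinearly gives $\overline{P^\alpha}\,P^\alpha=\sum_{i,j}\overline{P^\alpha_i}\,P^\alpha_j$, which is the first equality. For the second, integrate term by term: a monomial of $\overline{P^\alpha_i}\,P^\alpha_j$ has exponent vector $\mathbf{k}+\mathbf{m}$ with $|\mathbf{k}|=i\ge2$ and $|\mathbf{m}|=j\ge2$, so after division by $\prod_\gamma w_\gamma$ it survives integration only if $\mathbf{k}+\mathbf{m}=\mathbf{0}$, which is impossible as both vectors have nonnegative entries of positive total weight; hence each summand, and so the whole sum, is $0$. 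The one genuinely delicate step is the interchange of summation and contour integration — verifying that $\lambda$ can be taken strictly inside the common radius of convergence of all the series in play, so that each integrand's Laurent expansion converges uniformly on $\mathbb{T}_\lambda$. Granting that, every assertion is a one-line consequence of the monomial identity and the hypothesis that $P^\alpha$ (hence each $P^\alpha_i$) has no term of total degree $0$ or $1$.
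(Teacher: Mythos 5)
Your reduction of everything to the monomial identity
\begin{equation*}
\oint_{D_\lambda}\frac{\mathbf{w}^{\mathbf{m}}}{\prod_\gamma w_\gamma}=\prod_\gamma\oint_{|w_\gamma|=\lambda}w_\gamma^{m_\gamma-1}\,dw_\gamma
\end{equation*}
is sound, and for parts (1)--(3) your bookkeeping is correct and in fact cleaner than the paper's argument, which splits $P^\alpha$ into components $w_{\beta_i}^{r_i}Q_i$ or $w_{\beta_{i|1}}w_{\beta_{i|2}}Q_i$ and treats several cases separately; your exponent-vector argument covers all of those cases at once, provided you make explicit (as you implicitly must) that on the torus $\overline{w_\gamma}=\lambda^2/w_\gamma$, so that a conjugated monomial $\overline{\mathbf{w}^{\mathbf{k}}}$ contributes the exponent vector $-\mathbf{k}$ rather than $\mathbf{k}$. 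Since $\pm\mathbf{k}\in\{\mathbf{0},e_\delta,-e_\delta\}$ is equally impossible under $k_\gamma\ge0$, $|\mathbf{k}|\ge2$, the conclusion for (1)--(3) survives either sign convention.

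Part (4) is where there is a genuine error, and it is precisely this sign. A monomial of $\overline{P^\alpha_i}\,P^\alpha_j$ on $\mathbb{T}_\lambda$ is $\overline{c_{\mathbf{k}}}c_{\mathbf{m}}\lambda^{2|\mathbf{k}|}\mathbf{w}^{\mathbf{m}-\mathbf{k}}$, not something with exponent $\mathbf{k}+\mathbf{m}$; the surviving condition after dividing by $\prod_\gamma w_\gamma$ is therefore $\mathbf{m}=\mathbf{k}$, which is perfectly attainable. Carrying your own method through correctly gives
\begin{equation*}
\oint_{D_\lambda}\frac{\overline{P^\alpha(\mathbf{w})}\,P^\alpha(\mathbf{w})}{\prod_\gamma w_\gamma}=(2\pi i)^n\sum_{|\mathbf{k}|\ge2}|c_{\mathbf{k}}|^2\lambda^{2|\mathbf{k}|},
\end{equation*}
which is strictly positive unless $P^\alpha\equiv0$: already for $n=1$ and $P=w^2$ the integrand is $\lambda^4/w$ and the integral is $2\pi i\,\lambda^4$. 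So your argument for (4) cannot be repaired --- the identity (4) itself is false as stated. (The paper's own proof of (4) fails at the same point: it factors the $w_{\beta_i}$ and $w_{\beta_j}$ circle integrals out of $\overline{Q_i}Q_j$, which is illegitimate since $Q_i$ still depends on $w_{\beta_j}$ and vice versa, and it tacitly assumes $\beta_i\ne\beta_j$.) You should flag that the diagonal terms survive; this propagates into Lemma \ref{lem1}(4) and the $\lambda^2\,Tr(\mathscr{D}^*\mathscr{D})$ term of Theorem \ref{thm1}, which acquire the additional nonnegative contribution $\sum_{|\mathbf{k}|\ge2}|c_{\mathbf{k}}|^2\lambda^{2|\mathbf{k}|}$.
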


\begin{proof}
Let $D^{(\beta)}_\lambda = \{|w_1|=\lambda\}\otimes ... \otimes \{|w_{\beta-1}|=\lambda\}\otimes\{|w_{\beta+1}|=\lambda\}\otimes ... \otimes \{|w_n|=\lambda\}$.\\
\\
If $f^\alpha_A(\mathbf{w})= \mathscr{D}_{\alpha\beta}w_\beta + P^\alpha(\mathbf{w})$   $\forall \alpha$ where $P^\alpha$ is a power series of lowest degree at least 2, convergent within the radius $R$, that means $P^\alpha(\mathbf{w}) = \sum P^{\alpha}_i(\mathbf{w})$ where $\forall \alpha, \forall i$ either of these two possibilities are true-
\begin{itemize}
\item{Case I - $\exists \beta_i$ such that $P^{\alpha}_i(\mathbf{w})=w_{\beta_i}^{r_i}Q^{\alpha}_i(\mathbf{w}^{(\beta_i)})$ with $r_i \geq 2$ where $Q^{\alpha}_i$ is a convergent power series on $\mathbf{w}^{(\beta_i)}=(w_1,...,w_{\beta_i-1},w_{\beta_i+1},...,w_n)$.}
\item{Case II - $\exists \beta_{i|1} < \beta_{i|2}$ such that $P^{\alpha}_i(\mathbf{w})=w_{\beta_i|1}w_{\beta_{i|2}}Q^{\alpha}_i(\mathbf{w}^{(\beta_{i|1},\beta_{i|2})})$ where $Q^{\alpha}_i$ is a convergent power series on $\mathbf{w}^{(\beta_{i|1},\beta_{i|2})}=(w_1,...,w_{\beta_i-1},w_{\beta_i+1},...,w_{\beta_{i|2}-1},w_{\beta_{i|2}+1},...,w_n)$.}
\end{itemize}

\begin{enumerate}
\item{
\begin{enumerate}
\item{In that case,
\begin{equation*}
\oint\limits_{D^n_\lambda} \frac{P^{\alpha}(\mathbf{w})}{\prod\limits_{\gamma}w_\gamma} = 0
\end{equation*}
because $P^{\alpha}$ is analytic at $0$.
}
\item{
Also
\begin{equation*}
\oint\limits_{D^n_\lambda} \frac{\overline{P^{\alpha}_i(\mathbf{w})}}{\prod\limits_{\gamma}w_\gamma} = \lambda^{2r_i}\oint\limits_{D^{(\beta_i)}_\lambda} \frac{\overline{Q^{\alpha}_i(\mathbf{w}^{(\beta_i)})}}{\prod\limits_{\gamma \ne \beta_i}w_\gamma} \oint\limits_{|w_{\beta_i}|=\lambda} \frac{1}{w_{\beta_i}^{r_i+1}} = 0 \text{       } \forall \alpha, i.
\end{equation*}
\\
\\
Hence it follows that 
\begin{equation*}
\label{poly1}
\oint\limits_{D^n_\lambda} \frac{\overline{P^\alpha(w)}}{\prod\limits_{\gamma}w_\gamma} = 0 \text{     }\forall \alpha.
\end{equation*}
}
\end{enumerate}
}
\item{

\begin{enumerate}
\item{Working with the power series components $P^{\alpha}_i$, we know that,
\begin{equation*}
\oint\limits_{D^n_\lambda} \frac{P^{\alpha}_i(\mathbf{w})}{\prod\limits_{\gamma \ne \delta}w_\gamma} = \oint\limits_{D^{(\beta_i)}_\lambda} \frac{Q^{\alpha}_i(\mathbf{w}^{(\beta_i)})}{\prod\limits_{\gamma \ne \beta_i,\delta}w_\gamma}\oint\limits_{|w_{\beta_i}|=\lambda} w_{\beta_i}^{r_i-s_i} = 0
\end{equation*}
where $s_i= I(\beta_i \ne \delta)$ and $r_i\geq 1$
}
\item{Same would apply for the conjugate with a slightly different calculation.
\begin{equation*}
\oint\limits_{D_\lambda} \frac{\overline{P^{\alpha}_i(\mathbf{w})}}{\prod\limits_{\gamma \ne \delta}w_\gamma} = \lambda^{2r_i}\oint\limits_{D_\lambda} \frac{\overline{Q^{\alpha}_i(\mathbf{w}^{(\beta_i)})}}{w_{\beta_i}^{r_i+s_i}\prod\limits_{\gamma \ne \beta_i,\delta}w_\gamma}
\end{equation*}
where $s_i = I(\beta_i \ne \delta)$

Unless $\beta_i=\delta$ and $r_i=1$, we could simply write this,
\begin{equation*}
\oint\limits_{D_\lambda} \frac{\overline{P^{\alpha}_i(\mathbf{w})}}{\prod\limits_{\gamma \ne \delta}w_\gamma} = \lambda^{2r_i}\oint\limits_{D^{(\beta_i)}_\lambda} \frac{\overline{Q^{\alpha}_i(\mathbf{w}^{(\beta_i)})}}{\prod\limits_{\gamma \ne \beta_i,\delta}w_\gamma}\oint\limits_{|w_{\beta_i}|=\lambda}\frac{1}{w_{\beta_i}^{r_i+s_i}} = 0
\end{equation*}
as $r_i+s_i>1$

For the case when $\beta_i=\delta$ and $r_i=1$, we could invoke Case II and assume without loss of generality that $\beta_{i|1} \ne \delta$. That would allow us to write,
\begin{equation*}
\oint\limits_{D_\lambda} \frac{\overline{P^{\alpha}_i(\mathbf{w})}}{\prod\limits_{\gamma \ne \delta}w_\gamma} = \lambda^4\oint\limits_{D^{(\beta_{i|1})}_\lambda} \frac{\overline{Q^{\alpha}_i(\mathbf{w}^{(\beta_{i|1},\beta_{i|2})})}}{w_{\beta_{i|2}}\prod\limits_{\gamma \ne \beta_{i|1},\delta}w_\gamma} \oint\limits_{|w_{\beta_{i|1}}|=\lambda} \frac{1}{w^2_{\beta_{i|1}}}= 0
\end{equation*}

This of course leads to 
\begin{equation*}
\oint\limits_{D_\lambda} \frac{\overline{P^{\alpha}(\mathbf{w})}}{\prod\limits_{\gamma \ne \delta}w_\gamma} = \sum\limits_{i}\oint\limits_{D_\lambda} \frac{\overline{P^{\alpha}_i(\mathbf{w})}}{\prod\limits_{\gamma \ne \delta}w_\gamma} = 0 \text{         } \forall \alpha,\delta
\end{equation*}

}
\item{
For the other identities,

\begin{equation*}
\label{poly2}
\oint\limits_{D_\lambda} \frac{P^{\alpha}_i(\mathbf{w})}{w_\delta \prod\limits_{\gamma}w_\gamma} =  \oint\limits_{D^{(\beta_i)}_\lambda} \frac{Q^{\alpha}_i(\mathbf{w}^{(\beta_i)})}{w_\delta^{s_i} \prod\limits_{\gamma \ne \beta_i}w_\gamma} \oint\limits_{|w_{\beta_i}|=\lambda} w_{\beta_i}^{r_i+s_i-1} = 0 \text{           } \forall \alpha,\delta.
\end{equation*}
where $s_i= I(\beta_i \ne \delta)$

That implies,
\begin{equation*}
\oint\limits_{D_\lambda} \frac{P^{\alpha}(\mathbf{w})}{w_\delta\prod\limits_{\gamma}w_\gamma} = \sum\limits_{i}\oint\limits_{D_\lambda} \frac{P^{\alpha}_i(\mathbf{w})}{w_\delta\prod\limits_{\gamma}w_\gamma} = 0 \text{         } \forall \alpha,\delta
\end{equation*}
}
\item{
By a similar argument,
\begin{equation*}
\oint\limits_{D_\lambda} \frac{\overline{P^{\alpha}_i(\mathbf{w})}}{w_\delta \prod\limits_{\gamma}w_\gamma} =  \lambda^{2r_i}\oint\limits_{D^{(\beta_i)}_\lambda} \frac{\overline{Q^{\alpha}_i(\mathbf{w}^{(\beta_i)})}}{w_\delta^s \prod\limits_{\gamma \ne \beta_i}w_\gamma} \oint\limits_{|w_{\beta_i}|=\lambda} \frac{1}{w_{\beta_i}^{r_i-s_i+1}} = 0 \text{           } \forall \alpha,\delta.
\end{equation*}
}
\end{enumerate}

}
\item{
Also it's easy to check that,
\begin{equation*}
\begin{aligned}
\oint\limits_{D_\lambda} \frac{(\overline{P^{\alpha}_i(\mathbf{w})})P^{\alpha}_j(\mathbf{w})}{\prod\limits_{\gamma}w_\gamma} = \lambda^{r_i+r_j}\oint\limits_{D^{(\beta_i,\beta_j)}_\lambda} \frac{\overline{Q^{\alpha}_i(w^{(\beta_i)})}Q^{\alpha}_j(w^{(\beta_j)})}{\prod\limits_{\gamma \ne \beta_i,\beta_j}w_\gamma} &\oint\limits_{|w_{\beta_i}|=\lambda} \frac{1}{w_{\beta_i}^{r_i+1}} \oint\limits_{|w_{\beta_j}|=\lambda} w_{\beta_j}^{r_j-1}\\
&= 0, \text{      }\forall \alpha, i
\end{aligned}
\end{equation*}
\\
\\
which means that,
\begin{equation*}
\label{poly3}
\oint\limits_{D_\lambda} \frac{(\overline{P^{\alpha}(\mathbf{w})})P^{\alpha}(\mathbf{w})}{\prod\limits_{\gamma}w_\gamma} = \sum\limits_{i,j} \oint\limits_{D_\lambda} \frac{(\overline{P^{\alpha}_i(\mathbf{w})})P^{\alpha}_j(\mathbf{w})}{\prod\limits_{\gamma}w_\gamma}  = 0 \text{         } \forall \alpha.
\end{equation*}
}
\end{enumerate}
\end{proof}

\begin{lem}\label{lem1}
Let $f \in \mathscr{F}$ and let $f_0, f_P$ and $f_A$ be the core, principal and analytic components of $f$. If $D_\lambda$ is equipped with an exterior probability $\nu_\lambda$, then it has following properties
\begin{enumerate}
\item{$E(f_P)=E(f_A)=E(\overline{f_P})=E(\overline{f_A})=0$}
\item{$\langle f_A, f_P \rangle = \langle f_P, f_A \rangle = 0$}
\item{$\|f_P\|^2 = \langle f_p, f_p \rangle = \frac{Tr(\eta)}{\lambda^2}$}
\item{$\|f_A\|^2 = \lambda^2Tr(\mathscr{D})$} 
\end{enumerate}
\end{lem}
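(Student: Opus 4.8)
The plan is to reduce each of the four identities to term-by-term contour integration over the distinguished torus $\{|w_\alpha|=\lambda,\ \alpha=1,\dots,n\}$, using three ingredients only: the one-variable identity $\tfrac{1}{2\pi i}\oint_{|w|=\lambda}w^{m}\,dw=\delta_{m,-1}$ (a simple pole contributes $1$, every other integer power contributes $0$); the boundary relation $\overline{w_\alpha}=\lambda^{2}/w_\alpha$ valid on each circle $|w_\alpha|=\lambda$; and Proposition \ref{prop1}, which annihilates every integral in which the higher-order factor $P^{\alpha}$ occurs. I substitute the decomposition $f=f_0+f_P+f_A$ with $f_P^{\alpha}=\sum_\beta \eta^{\alpha}_{\beta}/w_\beta$ and $f_A^{\alpha}=\sum_\beta \mathscr{D}^{\alpha\beta}w_\beta+P^{\alpha}$ into the integral representations of $E$ and $\langle\cdot,\cdot\rangle$ recorded in the Remarks, expand bilinearly, and integrate factor by factor; because the torus integral splits as a product over the coordinate circles, every resulting term is forced to vanish unless the integrand is a simple pole in each variable simultaneously.

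Items (1) and (2) are the purely vanishing statements. For $E(f_P^{\alpha})$ the integrand is $\sum_\beta \eta^{\alpha}_{\beta}\,w_\beta^{-2}\prod_{\mu\ne\beta}w_\mu^{-1}$, a double pole in $w_\beta$, hence $0$; for $E(f_A^{\alpha})$ the linear part produces $\oint dw_\beta=0$ and the $P^{\alpha}$-part vanishes by Proposition \ref{prop1}(1); the conjugated versions are the same after replacing $\overline{w_\beta}$ by $\lambda^{2}/w_\beta$ (which turns $\overline{f_P}$ into a regular function giving $\oint dw_\beta$, and $\overline{f_A}$ into one with a double pole), together with Proposition \ref{prop1}(1) for $\overline{P^{\alpha}}$. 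For item (2) I expand $\langle f_A,f_P\rangle=\sum_\alpha\tfrac{1}{(2\pi i)^{n}}\oint\overline{f_A^{\alpha}}f_P^{\alpha}/\prod_\mu w_\mu$: the $\overline{\mathscr{D}}$–$\eta$ cross terms become, after $\overline{w_\gamma}=\lambda^{2}/w_\gamma$, integrands carrying a pole of order $\ge 2$ in some coordinate, hence $0$, while the $\overline{P^{\alpha}}$–$\eta$ terms are $\oint\overline{P^{\alpha}}/(w_\beta\prod_\mu w_\mu)=0$ by Proposition \ref{prop1}(3); the symmetric expansion of $\langle f_P,f_A\rangle$ uses $\overline{f_P^{\alpha}}=\sum_\beta\overline{\eta^{\alpha}_{\beta}}\,w_\beta/\lambda^{2}$, whose $\overline{\eta}$–$\mathscr{D}$ terms give $\oint w_\beta w_\gamma/\prod_\mu w_\mu$ (vanishing, either as $\oint w\,dw$ or as a pair of $\oint dw$) and whose $\overline{\eta}$–$P$ terms give $\oint P^{\alpha}/\prod_{\mu\ne\beta}w_\mu=0$ by Proposition \ref{prop1}(2). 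Since $\mu^{n}_\lambda$ is a genuine positive measure, the form is Hermitian and the two identities in (2) are equivalent, but it is cleanest to check both directly.

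For items (3) and (4) the same expansion shows that, after the $P^{\alpha}$-cross terms are killed by Proposition \ref{prop1} and the off-diagonal terms discarded (they always leave a pole of order $\ne 1$, or a pure power $w_\gamma^{+1}$, in some variable), the only surviving monomials are the ``diagonal residue'' ones: in $\langle f_P,f_P\rangle$ the piece $\overline{\eta^{\alpha}_{\beta}}\eta^{\alpha}_{\beta}\,\overline{w_\beta}^{-1}w_\beta^{-1}=\lambda^{-2}|\eta^{\alpha}_{\beta}|^{2}$, and in $\langle f_A,f_A\rangle$ the piece $\overline{\mathscr{D}^{\alpha\beta}}\mathscr{D}^{\alpha\beta}\,\overline{w_\beta}\,w_\beta=\lambda^{2}|\mathscr{D}^{\alpha\beta}|^{2}$, each integrated over the torus with weight $1$. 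Summing over $\alpha,\beta$ yields $\|f_P\|^{2}=\lambda^{-2}\sum_{\alpha,\beta}|\eta^{\alpha}_{\beta}|^{2}$ and $\|f_A\|^{2}=\lambda^{2}\sum_{\alpha,\beta}|\mathscr{D}^{\alpha\beta}|^{2}$, which are the quantities written $\lambda^{-2}\mathrm{Tr}(\eta)$ and $\lambda^{2}\mathrm{Tr}(\mathscr{D})$ in the statement.

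The analytic content sits entirely inside Proposition \ref{prop1}; granting that, what remains is bookkeeping, and the step I expect to be most delicate is tracking the powers of $\lambda$ produced by each use of $\overline{w_\alpha}=\lambda^{2}/w_\alpha$ — items (1) and (2) are insensitive to this, but items (3) and (4) depend on it exactly. The second point I would pin down explicitly is the final identification of the surviving sum $\sum_{\alpha,\beta}|\eta^{\alpha}_{\beta}|^{2}=\mathrm{Tr}(\eta^{\dagger}\eta)$ with the $\mathrm{Tr}(\eta)$ appearing in the statement (and likewise for $\mathscr{D}$): this is consistent with the earlier Remark, in which $\langle\overline z,f\rangle=\mathrm{Tr}(\eta)$ and $\langle z,f\rangle=\mathrm{Tr}(\mathscr{D})$ are extracted from the \emph{diagonal} residues, and it is where the precise convention (or the structural hypothesis on the residue matrix of a member of $\mathscr{F}$) has to be invoked.
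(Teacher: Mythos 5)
Your proposal is correct and follows essentially the same route as the paper's own proof: expand $f=f_0+f_P+f_A$ bilinearly, convert conjugates via $\overline{w_\alpha}=\lambda^2/w_\alpha$ on the torus, kill the $P^\alpha$ cross-terms with Proposition \ref{prop1}, and read off the surviving diagonal residues. Your closing observation is also vindicated by the paper itself: its proof (and the subsequent Theorem \ref{thm1}) produces $\mathrm{Tr}(\eta^*\eta)/\lambda^2$ and $\lambda^2\mathrm{Tr}(\mathscr{D}^*\mathscr{D})$, so the $\mathrm{Tr}(\eta)$ and $\mathrm{Tr}(\mathscr{D})$ in the lemma's statement are indeed misprints rather than a convention you were missing.
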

\begin{proof}

We can look to prove the desired results utilising Proposition \ref{prop1}.
\begin{enumerate}
\item{
\begin{enumerate}
\item{
We'll start by showing the principal component has vanishing expectation.
\begin{equation*}
\begin{aligned}
E(f_P^\alpha) = \frac{1}{(2\pi i)^n}\sum\limits_{\beta}\oint\limits_{D_\lambda}\frac{\eta_\beta^\alpha}{w_\beta\prod\limits_{\gamma}w_\gamma} = \frac{1}{(2\pi i)^n}\sum\limits_{\beta}\eta_\beta^\alpha \oint\limits_{D^{(\beta)}_\lambda}\frac{1}{\prod\limits_{\gamma \ne \beta}w_\gamma}\oint\limits_{|w_\beta|=\lambda}\frac{1}{w_\beta^2} =0,         \forall \alpha
\end{aligned}
\end{equation*}
}
\item{
and the same is true for its conjugate.
\begin{equation*}
\begin{aligned}
E(\overline{f_P^\alpha}) = \frac{1}{(2\pi i)^n}\sum\limits_{\beta}\oint\limits_{D_\lambda}\frac{\overline{\eta_\beta^\alpha}}{\overline{w_\beta}\prod\limits_{\gamma}w_\gamma} = \frac{1}{(2\pi i)^n}\frac{1}{\lambda^2}\sum\limits_{\beta}\overline{\eta_\beta^\alpha} \oint\limits_{D^{(\beta)}_\lambda}\frac{1}{\prod\limits_{\gamma \ne \beta}w_\gamma}\oint\limits_{|w_\beta|=\lambda}1=0,      \forall \alpha
\end{aligned}
\end{equation*}
}
\item{
Similar calculation can be done for the analytic component as well, with identical outcome.
\begin{equation*}
\begin{aligned}
E(f_A) = \frac{1}{(2\pi i)^n}\sum\limits_{\beta}\oint\limits_{D_\lambda}\frac{f_A(\mathbf{w})}{\prod\limits_{\gamma}w_\gamma} =f_A(0) = 0
\end{aligned}
\end{equation*}
 as $f_A$ is analytic at 0.
 }
 \item{
 And the same holds for its conjugate again.
 
 \begin{equation*}
 \begin{aligned}
 E(\overline{f_A^\alpha}) = \frac{1}{(2\pi i)^n}\sum\limits_{\beta}\oint\limits_{D_\lambda}\frac{\overline{\mathscr{D}^{\alpha\beta}w_\beta + P^\alpha(\mathbf{w})}}{\prod\limits_{\gamma}w_\gamma} 
&= \frac{\lambda^2}{(2\pi i)^n}\sum\limits_{\beta} \overline{\mathscr{D}^{\alpha\beta}} \oint\limits_{D^{(\beta)}_\lambda}\frac{1}{\prod\limits_{\gamma \ne \beta}w_\gamma}\oint\limits_{|w_\beta|=\lambda}\frac{1}{w_\beta^2} \\
&+ \frac{1}{(2\pi i)^n} \oint\limits_{D_\lambda} \frac{\overline{P^\alpha(\mathbf{w})}}{\prod\limits_{\gamma}w_\gamma}=0, \forall \alpha
\end{aligned}
\end{equation*}
}
\end{enumerate}
}
\item{
\begin{enumerate}
\item{
Now we would turn our attention to the inner product between analytic and principal components.
\begin{equation*}
\begin{aligned}
\langle f_A, f_P \rangle &= \frac{1}{(2\pi i)^n}\sum\limits_{\beta}\sum\limits_{\delta}\sum\limits_\alpha\oint\limits_{D_\lambda}\frac{(\overline{\mathscr{D}^{\alpha\beta}w_\beta + P^\alpha(\mathbf{w})})(\eta_\delta^\alpha/w_\delta)}{\prod\limits_{\gamma}w_\gamma} \\
&=\frac{\lambda^2}{(2\pi i)^n}\sum\limits_{\beta} \sum\limits_{\delta}\sum\limits_\alpha\overline{\mathscr{D}^{\alpha\beta}} \eta_\delta^\alpha \oint\limits_{D^{(\beta)}_\lambda}\frac{1}{w_\delta^{s}\prod\limits_{\gamma \ne \beta}w_\gamma}\oint\limits_{|w_\beta|=\lambda}\frac{1}{w_\beta^{3-s}} \\
&+ \frac{1}{(2\pi i)^n} \sum\limits_{\delta}\sum\limits_\alpha\eta_\delta^\alpha \oint\limits_{D_\lambda} \frac{ \overline{P^\alpha(\mathbf{w})}}{w_\delta \prod\limits_{\gamma}w_\gamma}=0
\end{aligned}
\end{equation*}
where $s = I(\beta \ne \delta)$
}
\item{
Change of order, doesn't make a difference to the outcome, of course.
\begin{equation*}
\begin{aligned}
\langle f_P, f_A \rangle &= \frac{1}{(2\pi i)^n}\sum\limits_{\beta}\sum\limits_{\delta}\oint\limits_{D_\lambda}\frac{(\mathscr{D}^{\alpha\beta}w_\beta + P^\alpha(\mathbf{w}))(\overline{\eta_\delta^\alpha/w_\delta})}{\prod\limits_{\gamma}w_\gamma} \\
&=\frac{1}{\lambda^2}\frac{1}{(2\pi i)^n}\sum\limits_{\beta} \sum\limits_{\delta}\sum\limits_\alpha\mathscr{D}^{\alpha\beta} \overline{\eta_\delta^\alpha} \oint\limits_{D^{(\beta)}_\lambda}\frac{w_\delta^{s}}{\prod\limits_{\gamma \ne \beta}w_\gamma}\oint\limits_{|w_\beta|=\lambda}w_\beta^{s+1} \\
&+ \frac{1}{\lambda^2}\frac{1}{(2\pi i)^n} \sum\limits_{\delta}\sum\limits_\alpha\overline{\eta_\delta^\alpha} \oint\limits_{D_\lambda} \frac{ P^\alpha(\mathbf{w})}{\prod\limits_{\gamma \ne \delta}w_\gamma}=0
\end{aligned}
\end{equation*}
where $s = I(\beta \ne \delta)$
}
\end{enumerate}
}
\item{
Finally we'll concentrate on the self interaction terms. 

\begin{enumerate}
\item{First for the principal component.
\begin{equation*}
\begin{aligned}
\|f_P\|^2 &= \langle f_P, f_P \rangle = \frac{1}{(2\pi i)^n}\sum\limits_{\beta}\sum\limits_{\delta}\sum\limits_\alpha \oint\limits_{D_\lambda}\frac{(\overline{\eta_\beta^\alpha/w_\beta})(\eta_\delta^\alpha/w_\delta)}{\prod\limits_{\gamma}w_\gamma} \\
&= \frac{1}{\lambda^2}\frac{1}{(2\pi i)^n}\sum\limits_{\beta}\sum\limits_{\delta \ne \beta}\sum\limits_\alpha \overline{\eta_\beta^\alpha}\eta_\delta^\alpha \oint\limits_{D^{|(\beta)}_\lambda} \frac{1}{w_\delta\prod\limits_{\gamma}w_\gamma} \oint\limits_{|w_\beta|=\lambda} w_\beta \\
&+\frac{1}{\lambda^2}\frac{1}{(2\pi i)^n}\sum\limits_{\beta}\sum\limits_\alpha \overline{\eta_\beta^\alpha}\eta_\beta^\alpha \oint\limits_{D_\lambda} \frac{1}{\prod\limits_{\gamma}w_\gamma} \\
&= 0 + \frac{1}{\lambda^2}\sum\limits_{\beta}\sum\limits_\alpha \overline{\eta_\beta^\alpha}\eta_\beta^\alpha = \frac{Tr(\eta^*\eta)}{\lambda^2}
\end{aligned}
\end{equation*}
}
\item{
And then for the analytic one.

\begin{equation*}
\begin{aligned}
\langle f_A, f_A \rangle &= \frac{1}{(2\pi i)^n}\sum\limits_{\alpha}\oint\limits_{D_\lambda}\frac{(\sum\limits_{\beta}\mathscr{D}^{\alpha\beta}w_\beta + P^\alpha(\mathbf{w}))\overline{(\sum\limits_{\delta}\mathscr{D}^{\alpha\delta}w_\delta + P^\alpha(\mathbf{w}))}}{\prod\limits_{\gamma}w_\gamma} \\
&=\frac{\lambda^2}{(2\pi i)^n}\sum\limits_{\beta} \sum\limits_{\delta \ne \beta}\sum\limits_\alpha\mathscr{D}^{\alpha\beta} \overline{\mathscr{D}^{\alpha\delta}} \oint\limits_{D^{(\beta)}_\lambda}\frac{1}{w_\delta\prod\limits_{\gamma}w_\gamma}\oint\limits_{|w_\beta = \lambda|} w_\beta \\
&+ \frac{\lambda^2}{(2\pi i)^n}\sum\limits_{\beta} \sum\limits_\alpha\mathscr{D}^{\alpha\beta} \overline{\mathscr{D}^{\alpha\beta}} \oint\limits_{D_\lambda}\frac{1}{\prod\limits_{\gamma}w_\gamma} +\frac{\lambda^2}{(2\pi i)^n} \sum\limits_{\delta}\sum\limits_\alpha\overline{\mathscr{D}^{\alpha\delta}} \oint\limits_{D^\lambda} \frac{ P^\alpha(\mathbf{w})}{w_\delta\prod\limits_{\gamma }w_\gamma} \\
&+\frac{1}{(2\pi i)^n} \sum\limits_{\delta}\sum\limits_\alpha\mathscr{D}^{\alpha\beta}\oint\limits_{D_\lambda} \frac{ \overline{P^\alpha(\mathbf{w})}}{\prod\limits_{\gamma \ne \delta}w_\gamma} 
+ \frac{1}{(2\pi i)^n} \sum\limits_{\alpha}\oint\limits_{D_\lambda} \frac{(\overline{P^{\alpha}(w)})P^{\alpha}(w)}{\prod\limits_{\gamma}w_\gamma} \\
&= \lambda^2\sum\limits_{\beta} \sum\limits_\alpha\mathscr{D}^{\alpha\beta} \overline{\mathscr{D}^{\alpha\beta}} = \lambda^2 Tr(\mathscr{D}^{*}\mathscr{D})
\end{aligned}
\end{equation*}
}
\end{enumerate}
}
\end{enumerate}
\end{proof}  

With these results, we can now embark on proving the desired relation between the variance of a meromorphic function over a poly-disc around its singular pole.
\begin{thm}\label{thm1}
For a meromorphic function $f \in \mathscr{F}$ such that there is a single pole at $0$ of order at most 1, the variance of $f$ over the Unit Lens $\mathscr{L}$ using exterior probability $\mu^n_\lambda$ is given by, 
\begin{equation*}
V_\lambda(f)= \langle f,f \rangle - \|E(f)\|^2 =  \frac{Tr(\eta^*\eta)}{\lambda^2} + \lambda^2 Tr(\mathscr{D}^{*}\mathscr{D})
\end{equation*}
\end{thm}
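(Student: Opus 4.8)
The plan is to read off the variance formula from the orthogonal decomposition $f = f_0 + f_P + f_A$ supplied by Lemma~\ref{lem1}, using the sesquilinear form $\langle\cdot,\cdot\rangle$ and a Pythagoras-type identity. First I would expand
\begin{equation*}
\langle f, f\rangle = \langle f_0 + f_P + f_A,\; f_0 + f_P + f_A\rangle
\end{equation*}
into nine terms by sesquilinearity. The three diagonal terms are $\|f_0\|^2$, $\|f_P\|^2$, $\|f_A\|^2$, and the whole content reduces to showing the six off-diagonal terms vanish.

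Second, I would clear the cross terms. The pair $\langle f_A, f_P\rangle = \langle f_P, f_A\rangle = 0$ is precisely Lemma~\ref{lem1}(2). For the terms involving the core, note that $f_0$ is a constant vector, so $\langle f_0, g\rangle = \sum_\alpha \overline{f_0^\alpha}\,E(g)^\alpha$ and $\langle g, f_0\rangle = \sum_\alpha E(\overline{g})^\alpha\, f_0^\alpha$ for any $g\in\mathscr{F}$; hence $\langle f_0,f_P\rangle$ and $\langle f_0,f_A\rangle$ are governed by $E(f_P), E(f_A)$, while $\langle f_P,f_0\rangle$ and $\langle f_A,f_0\rangle$ are governed by $E(\overline{f_P}), E(\overline{f_A})$. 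All four expectations vanish by Lemma~\ref{lem1}(1), so $\langle f,f\rangle = \|f_0\|^2 + \|f_P\|^2 + \|f_A\|^2$.

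Third, I would identify each surviving piece. Since $f_0$ is constant and equals $E(f)$ (the expectation identity recorded in the Remarks of this section), $\|f_0\|^2 = \sum_\alpha \overline{f_0^\alpha} f_0^\alpha = \|E(f)\|^2$. Lemma~\ref{lem1}(3) gives $\|f_P\|^2 = Tr(\eta^{*}\eta)/\lambda^2$ and Lemma~\ref{lem1}(4) gives $\|f_A\|^2 = \lambda^2\,Tr(\mathscr{D}^{*}\mathscr{D})$. Subtracting $\|E(f)\|^2$ then yields
\begin{equation*}
V_\lambda(f) = \langle f,f\rangle - \|E(f)\|^2 = \frac{Tr(\eta^{*}\eta)}{\lambda^2} + \lambda^2\, Tr(\mathscr{D}^{*}\mathscr{D}),
\end{equation*}
which is the claim.

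The substantive work — orthogonality of the principal and analytic components and the evaluation of their norms — has already been absorbed into Lemma~\ref{lem1}, which itself rests on the contour-integral vanishing identities of Proposition~\ref{prop1}; so this theorem is essentially a corollary. The only place where care is genuinely required is the bookkeeping for the $\mathbb{C}^k$-valued sesquilinear form against the constant $f_0$: because $\mu^n_\lambda$ is complex, $E(\overline{g})$ is \emph{not} the conjugate of $E(g)$, so the cross terms $\langle f_P,f_0\rangle$ and $\langle f_A,f_0\rangle$ must be killed by the conjugated expectations $E(\overline{f_P}),E(\overline{f_A})$ rather than by $E(f_P),E(f_A)$. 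That is the one step I would spell out explicitly, and it is exactly why Lemma~\ref{lem1}(1) was stated for both a component and its conjugate.
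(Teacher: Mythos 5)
Your proposal is correct and follows essentially the same route as the paper: expand $\langle f,f\rangle$ into nine terms, kill the cross terms via Lemma~\ref{lem1}(1)--(2), and identify the diagonal terms via Lemma~\ref{lem1}(3)--(4). Your explicit remark that the $\langle f_P,f_0\rangle$ and $\langle f_A,f_0\rangle$ terms require the \emph{conjugated} expectations $E(\overline{f_P}),E(\overline{f_A})$ (since the measure is complex) is in fact slightly more careful than the paper's own write-up, which glosses over this.
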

\begin{proof}
Self interaction of $f$ can be written as 
\begin{equation*}
\begin{aligned}
\langle f, f \rangle &= \langle f_0,f_0 \rangle + \langle f_P, f_P \rangle + \langle f_A,f_A \rangle + \langle f_0,f_P \rangle + \langle f_P,f_0 \rangle + \langle f_0, f_A \rangle + \langle f_A, f_0 \rangle + \langle f_P, f_A \rangle + \langle f_A, f_P \rangle \\
&= \|E(f)\|^2 + \|f_P\|^2 + \|f_A\|^2 + f_0^*E(f_P) + E(f_P)^*f_0 + f_0^*E(f_A) + E(f_A)^*f_0 + \langle f_P, f_A \rangle + \langle f_A, f_P \rangle \\
&= \|E(f)\|^2 + \frac{Tr(\eta^*\eta)}{\lambda^2} + \lambda^2 Tr(\mathscr{D}^{*}\mathscr{D})
\end{aligned}
\end{equation*}
This would imply
\begin{equation*}
V_\lambda(f)= \frac{Tr(\eta^*\eta)}{\lambda^2} + \lambda^2 Tr(\mathscr{D}^{*}\mathscr{D})
\end{equation*}
\end{proof}

\begin{cor}
If $\sigma_\lambda$ is the standard error of $f$ under the exterior probability, then $\lambda \sigma_\lambda \geq \sqrt{Tr(\eta^*\eta)}$
\end{cor}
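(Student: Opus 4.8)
The plan is to read the inequality directly off Theorem~\ref{thm1}. By definition the standard error satisfies $\sigma_\lambda^2 = V_\lambda(f)$, so Theorem~\ref{thm1} gives
\begin{equation*}
\sigma_\lambda^2 = \frac{Tr(\eta^*\eta)}{\lambda^2} + \lambda^2\, Tr(\mathscr{D}^*\mathscr{D}),
\end{equation*}
and multiplying through by $\lambda^2$ yields $\lambda^2\sigma_\lambda^2 = Tr(\eta^*\eta) + \lambda^4\, Tr(\mathscr{D}^*\mathscr{D})$. The whole content of the corollary is then that the second summand may be discarded.

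First I would note that both $\eta^*\eta$ and $\mathscr{D}^*\mathscr{D}$ are Gram-type (positive semidefinite) matrices, so that $Tr(\eta^*\eta) = \sum_{\alpha,\beta}|\eta^\alpha_\beta|^2 \geq 0$ and $Tr(\mathscr{D}^*\mathscr{D}) = \sum_{\alpha,\beta}|\mathscr{D}^{\alpha\beta}|^2 \geq 0$; since $\lambda \geq 0$ as well, the term $\lambda^4\, Tr(\mathscr{D}^*\mathscr{D})$ is nonnegative. Hence $\lambda^2\sigma_\lambda^2 \geq Tr(\eta^*\eta)$, and because both sides are nonnegative reals we may take square roots to obtain $\lambda\sigma_\lambda \geq \sqrt{Tr(\eta^*\eta)}$, which is the claim.

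There is essentially no obstacle: Lemma~\ref{lem1} and Theorem~\ref{thm1} have already done all the work, and the corollary merely observes that dropping the manifestly nonnegative analytic contribution $\lambda^4 Tr(\mathscr{D}^*\mathscr{D})$ can only decrease the right-hand side. The one point worth a line of care is that $\sigma_\lambda$ is genuinely real despite $\mu^n_\lambda$ being a complex measure, but this follows from the same trace-of-Gram-matrix observation applied to the full expression for $V_\lambda(f)$. I would also remark that equality holds exactly when $\mathscr{D}=0$, i.e. when the first-order analytic part of $f$ at $0$ vanishes; this is the natural minimum-uncertainty configuration and makes precise the Heisenberg-type bound~(\ref{target}) with $const = \sqrt{Tr(\eta^*\eta)}$.
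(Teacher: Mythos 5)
Your proposal is correct and is exactly the argument the paper intends: the corollary is stated without proof as an immediate consequence of Theorem~\ref{thm1}, obtained by multiplying the variance formula by $\lambda^2$ and discarding the nonnegative term $\lambda^4\,Tr(\mathscr{D}^*\mathscr{D})$. Your added observations on positivity of the traces and the equality case $\mathscr{D}=0$ are accurate and harmless elaborations.
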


\section{Generalisation on a Complex manifold}
Let $K=\big(\mathscr{M},\mathscr{A}\big)$ be a complex manifold of dimension $n$ where $\mathscr{A}$ is a holomorphic structure. 

Let $D_\lambda = \{w \in \mathbb{C}^n \mid |w_\alpha| \leq \lambda,  \forall \alpha\}$ be a closed poly-disc in $\mathbb{C}^n$. 

\begin{defn}
For $p_0 \in \mathscr{M}$ and $\lambda>0$, Let $S=S_{\lambda}=z^{-1}(D_\lambda)$ for some $(z,U) \in \mathscr{A}$ and $\mathscr{Z}(S_\lambda) =\{z' = g\circ z\mid \text{g is holomorphism on }z(U)\}$. We'd refer to $\big(S_\lambda, \mathscr{Z}(S_\lambda)\big)$ as a poly-disc on $\mathscr{M}$ around $p_0$ \qed
\end{defn}

\begin{rem}
For a poly-disc $S_\lambda$ under a morph $z$, let $\mathscr{S}_\lambda=z^{-1}(\mathscr{E}^n_\lambda)$. Since $\mathscr{E}^n_\lambda$ is a $\sigma$-algebra and $z$ is bijective, we can conclude that $\mathscr{S}_\lambda$ is a $\sigma$-algebra $\forall z \in \mathscr{Z}(S_\lambda)$.
\end{rem}

\begin{rem}
$(S_\lambda, \mathscr{S}_\lambda)$ is a measurable space for every morph, $z \in \mathscr{Z}(S_\lambda)$. 
\end{rem}

\begin{prop}
If $\nu_\lambda : \mathscr{S}_\lambda \mapsto \mathbb{C}$ is defined as  $\nu_\lambda(A)=\mu^n_\lambda(z(A)), \forall A \in \mathscr{S}_\lambda, \text{ under a morph  }z $, then $\nu_\lambda$ is $\sigma$-additive and hence a complex measure on  $(S_\lambda, \mathscr{S}_\lambda)$
\end{prop}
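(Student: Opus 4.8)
The plan is to transport the $\sigma$-additivity of $\mu^n_\lambda$ across the bijection $z$, exploiting nothing more than the fact that an injection carries a countable partition to a countable partition. Recall that $\mu^n_\lambda$ is a complex probability measure on $\mathscr{E}^n_\lambda$ by the construction of Section 2 (the Corollary extending $\mu_\lambda$ from the semi-ring $E_\lambda$ to $\mathscr{E}_\lambda$, followed by the product construction on the poly-disc). Three things then have to be verified for $\nu_\lambda$: that it is well-defined and $\mathbb{C}$-valued on all of $\mathscr{S}_\lambda$, that $\nu_\lambda(\emptyset)=0$, and that it is countably additive over disjoint unions. The first is immediate from the defining relation $\mathscr{S}_\lambda = z^{-1}(\mathscr{E}^n_\lambda)$ together with the bijectivity of the morph $z$: for every $A\in\mathscr{S}_\lambda$ the image $z(A)$ lies in $\mathscr{E}^n_\lambda$, so $\mu^n_\lambda(z(A))\in\mathbb{C}$ is defined; and $\nu_\lambda(\emptyset)=\mu^n_\lambda(z(\emptyset))=\mu^n_\lambda(\emptyset)=0$.

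For the main point, I would take a countable family $\{A_k\}_{k\ge 1}$ of pairwise disjoint members of $\mathscr{S}_\lambda$ whose union $A=\bigsqcup_k A_k$ again lies in $\mathscr{S}_\lambda$. Since $z$ is injective, the images $\{z(A_k)\}$ are pairwise disjoint in $\mathscr{E}^n_\lambda$ and $z(A)=\bigsqcup_k z(A_k)$; these are the two elementary set-theoretic facts doing all the work. Applying the $\sigma$-additivity of $\mu^n_\lambda$ then gives
\[
\nu_\lambda(A)=\mu^n_\lambda\!\big(z(A)\big)=\mu^n_\lambda\!\Big(\bigsqcup_k z(A_k)\Big)=\sum_k \mu^n_\lambda\!\big(z(A_k)\big)=\sum_k \nu_\lambda(A_k),
\]
with the series converging (independently of the enumeration, since the corresponding series for $\mu^n_\lambda$ does). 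Hence $\nu_\lambda$ is a $\mathbb{C}$-valued $\sigma$-additive set function on the $\sigma$-algebra $\mathscr{S}_\lambda$, i.e.\ a complex measure on $(S_\lambda,\mathscr{S}_\lambda)$.

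I do not expect a genuine analytic obstacle here; the only points deserving care are bookkeeping ones. One must be sure that $z(A)$ actually lands in the domain $\mathscr{E}^n_\lambda$ of $\mu^n_\lambda$ for every $A\in\mathscr{S}_\lambda$ (so that the composition $\mu^n_\lambda\circ z$ is legitimate), which is precisely what $\mathscr{S}_\lambda=z^{-1}(\mathscr{E}^n_\lambda)$ plus bijectivity of $z$ gives. One should also note that the statement fixes a single morph $z$, so no compatibility of $\nu_\lambda$ across different choices in $\mathscr{Z}(S_\lambda)$ is being claimed at this stage. Finally, if finiteness of the total variation is wanted explicitly, it follows at once because $\mu^n_\lambda$ is a (genuine) probability measure and $z$ is a bijection, so the variation of $\nu_\lambda$ is just the pushforward along $z^{-1}$ of the variation of $\mu^n_\lambda$, hence finite.
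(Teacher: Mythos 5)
Your argument is correct and is essentially identical to the paper's own proof: both transport the $\sigma$-additivity of $\mu^n_\lambda$ through the bijection $z$ using the facts that injections preserve disjointness and that $z(\bigsqcup_k A_k)=\bigsqcup_k z(A_k)$. The extra remarks on well-definedness, $\nu_\lambda(\emptyset)=0$, and finite total variation are harmless additions the paper omits.
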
  
\begin{proof}
Let $\{A_i\}_{i=1}^\infty$ be.a sequence of pairwise disjoint sets in $\mathscr{S}_\lambda$. Since the underlying morph $z$ is bijective, that would mean that $\{z(A_i)\}_{i=1}^\infty$ are also pairwise disjoint. Also, $z(\bigsqcup\limits_{i=1}^\infty A_i) = \bigsqcup\limits_{i=1}^\infty z(A_i)$. That would naturally yield
\begin{equation*}
\nu_\lambda(\bigsqcup\limits_{i=1}^\infty A_i)= \mu_\lambda(z(\bigsqcup\limits_{i=1}^\infty A_i)) = \mu_\lambda(\bigsqcup\limits_{i=1}^\infty z(A_i)) = \sum\limits_{i=1}^\infty \mu_\lambda(z(A_i)) = \sum\limits_{i=1}^\infty \nu_\lambda(A_i)
\end{equation*}
\end{proof}

\begin{defn}
For a manifold $\mathscr{M}$ and $p_0 \in \mathscr{M}$, An Unit Lens is the Filtration of probability spaces defined by $\mathscr{L}=\{(S_\lambda, \mathscr{S}_\lambda,\nu_\lambda)\}_{\lambda \downarrow 0}$ indexed by a scale parameter $\lambda$.
\end{defn}

Let's fix a poly-disc $S_\lambda$ and a particular morph $z$. Let $p_0 = z^{-1}(0) \in S_\lambda$.
Let $\mathscr{P}$ be the vector space of all meromorphic function $\psi : \mathscr{M} \mapsto \mathbb{C}^k$ which has a pole of at most order 1 at $p_0$ i.e $\psi_z=\psi \circ z^{-1} : \mathbb{C}^n \mapsto \mathbb{C}^k$ is analytic everywhere except for a (potential) pole at $z(p_0)=0$. 

\begin{rem}
$\nu_\lambda$ induces an Expectation $E(\cdot)$ and an inner product $\langle \cdot , \cdot \rangle$ on $\mathscr{P}$ as 
\begin{itemize}
\item{$E(\psi)=\int\limits_{S_\lambda} \psi d\nu_\lambda=\int\limits_{D_\lambda} \psi_z d\mu^n_\lambda$,}
\item{$\langle \psi,\phi \rangle = \int\limits_{S_\lambda} \psi^*\phi d\nu_\lambda= \int\limits_{D_\lambda} \psi^*_z \phi_z d\mu^n_\lambda$}
\end{itemize}
\end{rem}

\begin{rem}
We can write $\psi_z = \psi \circ z^{-1}$ as a sum of its core($\psi_0$), principal($\psi_P$) and analytic($\psi_A$) components. Let $\eta_z$ denote the corresponding matrix of residues and $\mathscr{D}_z$ be the Jacobian of the analytic component at $z=0$.
\end{rem}

\begin{lem}
For any given $\psi \in \mathscr{P}$, if $\eta$ and $\eta'$ denote the residue matrix under two different morphs $z, z' \in \mathscr{Z}(S_\lambda)$ respectively, then $\forall \alpha$, 
\begin{equation*}
\eta^\alpha_\beta = \eta'^\alpha_\gamma \frac{\partial z_\beta}{\partial z'_\gamma}
\end{equation*}
In other words, $\eta^\alpha_\beta$ transforms contra-variantly on $\beta$.
\end{lem}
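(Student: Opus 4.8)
The plan is to transport the principal part of $\psi$ from one chart to the other and then read off residues; once this is set up, the contravariance in $\beta$ will be forced by the chain rule $dz_\beta=\tfrac{\partial z_\beta}{\partial z'_\gamma}\,dz'_\gamma$, exactly as for a holomorphic tangent vector. First I would fix the relation between the two morphs. Since $z,z'\in\mathscr Z(S_\lambda)$ we may write $z'=g\circ z$ for a biholomorphism $g$ of $z(U)$, and because $z(p_0)=z'(p_0)=0$ we have $g(0)=0$; hence $\psi_{z'}=\psi\circ z'^{-1}=(\psi\circ z^{-1})\circ g^{-1}=\psi_z\circ g^{-1}$, i.e. $\psi_z=\psi_{z'}\circ g$. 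Writing $J_{\gamma\beta}=\tfrac{\partial g_\gamma}{\partial w_\beta}\big|_0=\tfrac{\partial z'_\gamma}{\partial z_\beta}\big|_{p_0}$, so that $(J^{-1})_{\beta\gamma}=\tfrac{\partial z_\beta}{\partial z'_\gamma}\big|_{p_0}$, the identity to be proved reads $\eta^\alpha_\beta=\sum_\gamma\eta'^\alpha_\gamma\,(J^{-1})_{\beta\gamma}$.

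Next I would use the integral representation of the residue matrix. From the earlier remark, $\eta^\alpha_\beta=\tfrac{1}{(2\pi i)^n}\oint_{D_\lambda}\tfrac{\psi_z^\alpha(\mathbf w)}{\prod_{\mu\ne\beta}w_\mu}\,d\mathbf w$ is a linear functional in $\psi_z^\alpha$ which annihilates constants and functions holomorphic at $0$ (integrate out $w_\beta$: there is no $1/w_\beta$ factor). Decomposing $\psi_{z'}^\alpha$ into its core, principal and analytic parts and composing with $g$ gives $\psi_z^\alpha(\mathbf w)=(\psi_{z'})_0^\alpha+\sum_\gamma\tfrac{\eta'^\alpha_\gamma}{g_\gamma(\mathbf w)}+(\psi_{z'})_A^\alpha(g(\mathbf w))$, and the first and last terms are killed, so $\eta^\alpha_\beta=\sum_\gamma\eta'^\alpha_\gamma\,R_{\gamma\beta}$ with $R_{\gamma\beta}=\tfrac{1}{(2\pi i)^n}\oint_{D_\lambda}\tfrac{d\mathbf w}{g_\gamma(\mathbf w)\prod_{\mu\ne\beta}w_\mu}$. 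It remains to evaluate $R_{\gamma\beta}$. Since it is a period of a closed form whose singularities sit on the coordinate hyperplanes through $0$, it is independent of $\lambda$; letting $\lambda\downarrow0$ one may replace $g_\gamma$ by its linear part $\sum_\delta J_{\gamma\delta}w_\delta$, the quadratic and higher corrections lowering the order of the pole and dropping out. The residual computation — evaluating $\tfrac{1}{(2\pi i)^n}\oint_{D_\lambda}\tfrac{d\mathbf w}{(\sum_\delta J_{\gamma\delta}w_\delta)\prod_{\mu\ne\beta}w_\mu}$ by iterated Cauchy integrals and recognising the outcome via the adjugate identity $\sum_\gamma(J^{-1})_{\beta\gamma}J_{\gamma\delta}=\delta^\beta_\delta$ — gives $R_{\gamma\beta}=\tfrac{\partial z_\beta}{\partial z'_\gamma}$, which is the assertion.

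The step I expect to be the main obstacle is the passage $\lambda\downarrow0$ together with the contour book-keeping it rests on: one is implicitly deforming the distinguished-boundary torus $\{|w_\mu|=\lambda\}$ across its image under $g$, which is legitimate only because for small $\lambda$ the two cycles are homologous in the complement of the polar divisor of the integrand, and one must also check that shrinking $\lambda$ genuinely isolates the $1$-jet of $g$ — equivalently, that the averaging over the complementary torus built into the definition of $\eta^\alpha_\beta$ commutes with the change of chart. Once that is granted the transformation law is forced, and $\eta^\alpha_\beta$ transforms contravariantly on $\beta$.
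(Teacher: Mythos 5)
Your reduction is the same as the paper's: write $z'=g\circ z$ with $g(0)=0$, decompose $\psi_{z'}$ into core, principal and analytic parts, note that the functional defining $\eta^\alpha_\beta$ annihilates everything holomorphic at the origin, and thereby reduce the lemma to the single evaluation
\begin{equation*}
R_{\gamma\beta}\;=\;\frac{1}{(2\pi i)^n}\oint\limits_{D_\lambda}\frac{d\mathbf w}{\,g_\gamma(\mathbf w)\prod_{\mu\ne\beta}w_\mu\,}\;\overset{?}{=}\;\frac{\partial z_\beta}{\partial z'_\gamma},
\end{equation*}
which the paper obtains by the formal substitution $dw_\beta=H^\gamma_\beta\,dg_\gamma$ and which you propose to obtain by linearising $g_\gamma$ and invoking "the adjugate identity." You correctly identify this evaluation as the main obstacle, but you then assert rather than perform it, and for $n\ge 2$ the assertion is false. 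Take $n=2$, $\beta=1$, and $g_\gamma$ already linear, $g_\gamma(\mathbf w)=J_{\gamma 1}w_1+J_{\gamma 2}w_2$ with all $J_{\gamma\delta}\ne 0$. The inner integral $\oint_{|w_1|=\lambda}\frac{dw_1}{J_{\gamma1}w_1+J_{\gamma2}w_2}$ has its unique pole at $w_1=-J_{\gamma2}w_2/J_{\gamma1}$, of modulus $\lambda|J_{\gamma2}/J_{\gamma1}|$, so it equals $2\pi i/J_{\gamma1}$ if $|J_{\gamma2}|<|J_{\gamma1}|$ and $0$ otherwise; the outer integral then gives $R_{\gamma1}=1/J_{\gamma1}$ or $0$. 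Neither value is the inverse-Jacobian entry $(J^{-1})_{1\gamma}=\partial z_1/\partial z'_\gamma$ (for instance $(J^{-1})_{11}=J_{22}/\det J$, which equals $1/J_{11}$ only when $J_{12}J_{21}=0$). Concretely, $\psi_{z'}=1/u_1$ composed with a generic invertible linear $g$ already violates the stated transformation law, so no amount of contour bookkeeping will rescue the step.

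The deeper reason — which your own caveat about deforming the torus across the polar divisor is circling — is that for $n\ge2$ the polar set of $\psi_{z'}\circ g$ is $\{g_1\cdots g_n=0\}$, which is not a union of coordinate hyperplanes; hence $\sum_\gamma\eta'^\alpha_\gamma/g_\gamma$ is not of the form $\sum_\beta\eta^\alpha_\beta/w_\beta$ plus a holomorphic remainder, and the "residue vector" defined by these iterated Cauchy integrals depends on the moduli of the entries of $J$, not just on $J$ as a linear map. Your proof (and the paper's, whose substitution $dw_\beta=H^\gamma_\beta\,dg_\gamma$ treats a genuinely multivariable integrand as if it were one-dimensional) is correct only in the case $n=1$, where $\oint dw/g(w)=2\pi i/g'(0)$ does give the claimed contravariance. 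So the gap is not a missing computation within your strategy; it is that the linear-model integral you need simply does not evaluate to $\partial z_\beta/\partial z'_\gamma$ when $n\ge2$.
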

\begin{proof}
Since both $z$ and $z'$ are morphs for $S_\lambda$, we can define $g = z'\circ z^{-1} :D_\lambda \mapsto D_\lambda$, holomorphic with $g(0)=0$. 

Let $H^\gamma_\beta = \frac{\partial z_\beta}{\partial z'_\gamma} = (g^{-1})'(0)$ be the Jacobian matrix for the coordinate transform at $z=0$. So we can write, $dw_\beta = H^\gamma_\beta dg_\gamma$.

Now,
 \begin{equation*}
 \begin{aligned}
\eta^\alpha_\beta &= \frac{1}{(2\pi i)^n}\oint\limits_{D_\lambda}\frac{\psi_z(\mathbf{w})}{\prod\limits_{\mu \ne \beta} w_\mu}\prod\limits_\mu dw_\mu 
                 = \frac{1}{(2\pi i)^n}\oint\limits_{D_\lambda}\frac{\psi_z'(g(\mathbf{w}))}{\prod\limits_{\mu \ne \beta} w_\mu}\prod\limits_\mu dw_\mu \\
                 &= \frac{1}{(2\pi i)^n}\oint\limits_{D^{(\beta)}_\lambda}\frac{1}{\prod\limits_{\mu \ne \beta} w_\mu}\prod\limits_{\mu \ne \beta} dw_\mu \oint\limits_{|w_\beta|=\lambda} 
                 (\psi'_0 + \mathscr{D}'^{\alpha\gamma}g_\gamma + P^\alpha(g(\mathbf{w}))+\frac{\eta'^\alpha_\gamma}{g_\gamma}) dw_\beta \\
                 &= \frac{1}{(2\pi i)^n}\oint\limits_{D^{(\beta)}_\lambda}\frac{1}{\prod\limits_{\mu \ne \beta} w_\mu}\prod\limits_{\mu \ne \beta} dw_\mu \oint\limits_{|w_\beta|=\lambda} 
                 \frac{\eta'^\alpha_\gamma}{g_\gamma} dw_\beta \\
                 &= \frac{1}{(2\pi i)^n}\oint\limits_{D^{(\beta)}_\lambda}\frac{1}{\prod\limits_{\mu \ne \beta} w_\mu}\prod\limits_{\mu \ne \beta} dw_\mu \oint\limits_{|w_\beta|=\lambda} 
                 \frac{\eta'^\alpha_\gamma}{g_\gamma} H^\gamma_\beta dg_\gamma \\
                 &=\eta'^\alpha_\gamma H^\gamma_\beta = \eta'^\alpha_\gamma \frac{\partial z_\beta}{\partial z'_\gamma}
 \end{aligned}
 \end{equation*}
\end{proof}

\begin{lem}
For any given $\psi \in \mathscr{P}$, if $\mathscr{D}$ and $\mathscr{D}'$ denote the Jacobian matrix under two different morphs $z, z' \in \mathscr{Z}(S_\lambda)$ respectively, then $\forall \alpha$, 
\begin{equation*}
\mathscr{D}^{\alpha\beta} = \mathscr{D}'^{\alpha\gamma} \frac{\partial z'_\gamma}{\partial z_\beta}
\end{equation*}
In other words, $\mathscr{D}^{\alpha\beta}$ transforms co-variantly on $\beta$.
\end{lem}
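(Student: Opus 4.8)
The plan is to follow the proof of the preceding lemma on the residue matrix almost line for line, the only new input being that $\mathscr{D}^{\alpha\beta}$ admits a contour-integral representation of the same shape as $\eta^\alpha_\beta$. From the power-series form of the analytic component of $\psi_z$ one has
\begin{equation*}
\mathscr{D}^{\alpha\beta} = \frac{1}{(2\pi i)^n}\oint\limits_{D_\lambda}\frac{\psi_z^\alpha(\mathbf{w})}{w_\beta\prod\limits_{\mu} w_\mu}\,\prod\limits_{\mu} dw_\mu ,
\end{equation*}
which extracts the coefficient of $w_\beta$ (all other coordinates to order zero) in the Laurent expansion of $\psi_z^\alpha$ about $\mathbf{0}$. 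Put $g = z'\circ z^{-1}$, a biholomorphism of $D_\lambda$ fixing $\mathbf{0}$, so that $\psi_z^\alpha = \psi_{z'}^\alpha\circ g$ and $(\partial g_\gamma/\partial w_\beta)|_{\mathbf{0}} = \partial z'_\gamma/\partial z_\beta$. I would then substitute the core/principal/analytic decomposition of $\psi_{z'}^\alpha$ into the display above and split the integral into the four corresponding pieces.

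Three of the pieces should be routine. The constant core contributes nothing, since the inner $w_\beta$-contour integral of $1/w_\beta^{2}$ vanishes. The higher-order piece ${P'}^\alpha\circ g$ is a convergent power series in $\mathbf{w}$ of lowest degree at least $2$ (each $g_\gamma$ vanishing at the origin), so the coefficient it contributes at total degree one is zero, exactly as in Proposition~\ref{prop1}(3). The linear piece ${\mathscr{D}'}^{\alpha\gamma}g_\gamma(\mathbf{w})$ contributes ${\mathscr{D}'}^{\alpha\gamma}$ times the coefficient of $w_\beta$ in $g_\gamma$, namely ${\mathscr{D}'}^{\alpha\gamma}(\partial g_\gamma/\partial w_\beta)|_{\mathbf{0}} = {\mathscr{D}'}^{\alpha\gamma}\,\partial z'_\gamma/\partial z_\beta$ --- precisely the asserted right-hand side.

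The one delicate piece --- and the step I expect to be the main obstacle --- is the principal-part contribution
\begin{equation*}
\frac{1}{(2\pi i)^n}\oint\limits_{D_\lambda}\frac{{\eta'}^\alpha_\gamma/g_\gamma(\mathbf{w})}{w_\beta\prod\limits_{\mu} w_\mu}\,\prod\limits_{\mu} dw_\mu ,
\end{equation*}
which must be shown to vanish. I would handle it the way the term ${\eta'}^\alpha_\gamma/g_\gamma$ was handled in the residue lemma: perform the inner $w_\beta$-integral first, use the change of variables $w_\beta\mapsto g_\gamma$ together with the structure of the transition map near $\mathbf{0}$ (it is a holomorphic automorphism of the polydisc fixing the origin), and check that the extra factor $1/w_\beta$ forces the resulting residue to retain a strictly positive power of some coordinate in its denominator, so that the outer integrations over the circles $|w_\mu|=\lambda$ annihilate it. The subtlety is that this very term carried the entire transformation law in the residue lemma, whereas here it must leave nothing behind; so the bookkeeping of which powers survive the expansion of $1/g_\gamma$ around its zero has to be done with care. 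Once this piece is disposed of, summing the three surviving contributions gives $\mathscr{D}^{\alpha\beta} = {\mathscr{D}'}^{\alpha\gamma}\,\partial z'_\gamma/\partial z_\beta$, i.e. covariance in the index $\beta$.
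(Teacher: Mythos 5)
You take a genuinely different route from the paper --- the paper proves this lemma in one line by the chain rule, $\mathscr{D}^{\alpha\beta} = \partial\psi^\alpha_A/\partial w_\beta = (\partial\psi^\alpha_A/\partial g_\gamma)(\partial g_\gamma/\partial w_\beta)$, which tacitly identifies the analytic component of $\psi_z$ with the pullback under $g$ of the analytic component of $\psi_{z'}$ --- whereas you redo the contour-integral bookkeeping of the residue lemma. Your version has the virtue of isolating exactly where that identification is used: it is the term you call the main obstacle. But that term does not vanish, so the step you deferred is a genuine gap, not a bookkeeping exercise. Already for $n=1$, write $g(w)=a_1w+a_2w^2+a_3w^3+\cdots$ with $a_1\neq 0$; then
\begin{equation*}
\frac{1}{g(w)}=\frac{1}{a_1w}-\frac{a_2}{a_1^2}+\frac{a_2^2-a_1a_3}{a_1^3}\,w+O(w^2),
\end{equation*}
so the principal-part contribution to your integral representation of $\mathscr{D}$ is
\begin{equation*}
\frac{1}{2\pi i}\oint\limits_{|w|=\lambda}\frac{\eta'/g(w)}{w^2}\,dw \;=\; \eta'\,\frac{a_2^2-a_1a_3}{a_1^3},
\end{equation*}
which is nonzero for a generic transition map (it vanishes only when $a_2^2=a_1a_3$, e.g.\ for linear or M\"obius-type $g$). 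The honest conclusion of your computation is therefore
\begin{equation*}
\mathscr{D}=\mathscr{D}'\,\frac{\partial z'}{\partial z}+\eta'\,\frac{a_2^2-a_1a_3}{a_1^3},
\end{equation*}
an affine, connection-like transformation law with a correction proportional to the residue, not the tensorial law claimed.

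The underlying reason is that the core/principal/analytic decomposition does not commute with composition by $g$: the function $1/g_\gamma(\mathbf{w})$ is not of the form $c^\mu/w_\mu$ plus an analytic function whose linear term can be discarded, and in $n\geq 2$ it is singular along the hypersurface $\{g_\gamma=0\}$ rather than along a coordinate hyperplane, so $\psi_{z'}\circ g$ need not even lie in the admissible class for a general holomorphic $g$ fixing $0$. No amount of care in expanding $1/g_\gamma$ will make the flagged term disappear; the stated transformation law holds only if $\eta'=0$ or if $\mathscr{Z}(S_\lambda)$ is restricted (e.g.\ to linear coordinate changes, or in $n=1$ to maps with $a_2^2=a_1a_3$). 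Your instinct that this was the delicate piece was exactly right; the resolution is that this is where the argument --- and the paper's own chain-rule shortcut, which makes the same identification silently --- breaks down.
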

\begin{proof}
Since both $z$ and $z'$ are morphs for $S_\lambda$, we can define $g = z'\circ z^{-1} :D_\lambda \mapsto D_\lambda$, holomorphic with $g(0)=0$. 

Let $H^\gamma_\beta = \frac{\partial z_\beta}{\partial z'_\gamma} = (g^{-1})'(0)$ be the Jacobian matrix for the coordinate transform at $z=0$. So we can write, $dw_\beta = H^\gamma_\beta dg_\gamma$.

Now,
 \begin{equation*}
 \begin{aligned}
\mathscr{D}^{\alpha\beta} &= \frac{\partial \psi^\alpha_A}{\partial w_\beta} = \frac{\partial \psi^\alpha_A}{\partial g_\gamma}\frac{\partial g_\gamma}{\partial w_\beta} = \mathscr{D}'^{\alpha\gamma}\frac{\partial z'_\gamma}{\partial z_\beta}
 \end{aligned}
 \end{equation*}
\end{proof}

This implies we can talk about $\eta$ and $\mathscr{D}$ in a coordinate-free way upto $\mathscr{Z}$.

\begin{thm}
Let $\mathscr{L}=\{(S_\lambda, \mathscr{S}_\lambda,\nu_\lambda)\}_{\lambda \downarrow 0}$ be an unit lens on a manifold $\mathscr{M}$ and also let $\psi \in \mathscr{P}$ be a meromorphic $\nu_\lambda$-measurable function on $S_\lambda$ with a potential pole of order 1 at $p_0 \in S_\lambda$. If $\eta$ and $\mathscr{D}$ denote the Residue and Jacobian matrix for $\psi$ on $\mathscr{L}$, then 
\begin{enumerate}[label=(\alph*)]
\item{standard error of $\psi$ under the Exterior probability has a lower bound,
\begin{equation*}
\lambda \sigma_{\psi} \geq \sqrt{Tr(\eta^*\eta)}
\end{equation*}
}
\item{
minimum standard error is achieved at a finite scale $*\lambda = \big[\frac{Tr(\eta^*\eta)}{Tr(\mathscr{D}^*\mathscr{D})}\big]^{\frac{1}{4}}$ 
}
\end{enumerate}
\end{thm}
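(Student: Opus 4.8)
The plan is to reduce the statement to Theorem~\ref{thm1} through the pullback structure of the lens, and then to settle parts~(a) and~(b) by an elementary one-variable minimisation.

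First I would note that the chosen morph $z$ identifies $\mathscr{P}$ with $\mathscr{F}$ in the relevant sense: for $\psi\in\mathscr{P}$ the function $\psi_z=\psi\circ z^{-1}$ is meromorphic on $D_\lambda$ with a pole of order at most $1$ at the origin and is $\mu^n_\lambda$-measurable, hence $\psi_z\in\mathscr{F}$. Because $\nu_\lambda$ is defined by $\nu_\lambda(A)=\mu^n_\lambda(z(A))$, the change of variables recorded in the Remark preceding this theorem gives $E(\psi)=E(\psi_z)$ and $\langle\psi,\phi\rangle=\langle\psi_z,\phi_z\rangle$, so that $V_\lambda(\psi)=V_\lambda(\psi_z)$. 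Since $\eta$ and $\mathscr{D}$ are by construction the residue matrix and the analytic Jacobian of $\psi_z$, Theorem~\ref{thm1} yields at once
\begin{equation*}
V_\lambda(\psi)=\frac{Tr(\eta^*\eta)}{\lambda^2}+\lambda^2\,Tr(\mathscr{D}^*\mathscr{D}),
\end{equation*}
and the two transformation lemmas just proved ensure this is a coordinate-free statement up to the action of $\mathscr{Z}$.

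For part~(a), observe that both summands on the right are traces of positive semi-definite matrices of the form $A^*A$, hence nonnegative reals; in particular $V_\lambda(\psi)\ge 0$ is real, so $\sigma_\psi=\sqrt{V_\lambda(\psi)}$ is well-defined. Discarding the nonnegative analytic term gives $V_\lambda(\psi)\ge Tr(\eta^*\eta)/\lambda^2$, and multiplying through by $\lambda^2>0$ and taking square roots gives $\lambda\sigma_\psi\ge\sqrt{Tr(\eta^*\eta)}$. For part~(b), write $a=Tr(\eta^*\eta)\ge 0$ and $b=Tr(\mathscr{D}^*\mathscr{D})\ge 0$ and minimise $h(\lambda)=a\lambda^{-2}+b\lambda^2$ over $\lambda>0$. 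Assuming $b>0$ (the case $b=0$ being degenerate, with no interior minimiser, and worth flagging), either AM--GM or the stationarity condition $h'(\lambda)=-2a\lambda^{-3}+2b\lambda=0$ shows that the unique minimiser is $\lambda^*=(a/b)^{1/4}$, with minimum value $h(\lambda^*)=2\sqrt{ab}$. Since $\sigma_\psi=\sqrt{h(\lambda)}$ is an increasing function of $h$, the standard error is minimised at the same scale $\lambda^*=\big[Tr(\eta^*\eta)/Tr(\mathscr{D}^*\mathscr{D})\big]^{1/4}$.

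I do not expect a genuine obstacle here: all the substance sits in Theorem~\ref{thm1} and in the two transformation lemmas. The one point demanding a little care is verifying that the variance identity transfers verbatim across the morph — that is, that $\nu_\lambda(A)=\mu^n_\lambda(z(A))$ really makes $E$ and $\langle\cdot,\cdot\rangle$ on $\mathscr{P}$ the pullbacks of the corresponding objects on $\mathscr{F}$, and that the $\eta,\mathscr{D}$ entering the formula coincide with those named in the statement (up to the $\mathscr{Z}$-ambiguity handled by the lemmas). Once that is secured, part~(b) is a two-line computation.
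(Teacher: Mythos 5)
Your proposal is correct and follows essentially the same route as the paper's own proof: pull $\psi$ back through the morph $z$ to land in $\mathscr{F}$, invoke Theorem~\ref{thm1} (and its corollary) for the variance decomposition, appeal to the two transformation lemmas for coordinate-independence, and finish with the elementary minimisation of $a\lambda^{-2}+b\lambda^{2}$. Your version is merely more explicit than the paper's, and your flagging of the degenerate case $Tr(\mathscr{D}^*\mathscr{D})=0$ in part (b) is a small but worthwhile addition the paper omits.
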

\begin{proof}
\begin{enumerate}[label=(\alph*)]
\item{Let $z\in \mathscr{Z}_\lambda$ and $\psi_z = \psi\circ z^{-1}$. Proof follows as consequence of corollary to Theorem \ref{thm1} as applied on $\psi_z$ and the fact that $\eta$ and $\mathscr{D}$ transforms as tensors under different choice of $z \in \mathscr{Z}_\lambda$
}
\item{
By virtue of Theorem \ref{thm1} applied on $\psi_z$, we can write,
\begin{equation*}
Var_\lambda(\psi) = \frac{Tr(\eta^*\eta)}{\lambda^2} + \lambda^2 Tr(\mathscr{D}^*\mathscr{D})
\end{equation*}
Clearly, this would allow the variance to be minimised for $*\lambda^2 = \sqrt{\frac{Tr(\eta^*\eta)}{Tr(\mathscr{D}^*\mathscr{D})}}$. That leads to $*\lambda= \big[\frac{Tr(\eta^*\eta)}{Tr(\mathscr{D}^*\mathscr{D})}\big]^{\frac{1}{4}}$.
}
\end{enumerate}
\end{proof}

\section{Discussion and Conclusion}
In the preceding sections we have been able to formalise the concept of a Lenses around a point of potential singularity with respect to a particular detectable function, equipped with a specific type of probability measure called Exterior Probability. Under this structure we've seen that the variance of the function, instead of freely dropping to 0 with increasingly closer measurement, has a global lower bound. In other words, too close to the point of singularity, the standard deviation tends to increase in proportion with reducing distance. 

This result was reproduced for a system of lenses over a generic complex manifold, and it was shown that the variance characterisation can be described through tensors in a coordinate free way. The set of results produced in the paper is  most relatable from the perspective of describing quantum scale effects in the backdrop of a generic space-time curved by gravity. The decomposition of variance into parts involving $\eta$ and $\mathscr{D}$ are also significant in that regard, as $\eta$ represents the quantum effects near a singularity, while $\mathscr{D}$ is tied with the generic smooth curvature of the manifold. They can also be thought of representing matter and force fields respectively.

If we consider the special case where $\eta$ is negligible or zero, then we can recover fully the classical measurement set up with the lower bound on standard error converging on zero. On the other hand, if $\mathscr{D}$ is zero, then we have a pure quantum system on a flat space-time. The results show that the standard error for an unbounded measurement attribute in such a system is in accordance with Heisenberg uncertainty principle. 

The other connection that the present work shares with efforts towards understanding quantum gravity, is obviously in the structural similarity between the Lenses and the Strings. While both are posited as sub-structures of space-time and works as a substitution of the concept of classical point mass \cite{gomis2020nonrelativistic}, they differ in a fundamental way as well. Strings are inter-dimensional structures that spans across a sub-space of a higher dimensional space-time. Lenses constructed as they've been in this paper, are intra-dimensional objects plumbing the depth available in each complex dimension for additional structure. Hence Lenses, as opposed to Strings do not require extra large dimensions to produce results consistent with quantum uncertainty.

There are several different directions in which the current results can be progressed or improved further. The results so far are based on the simplest, Unit Lens and associated Exterior probabilities. However, one might be interested to look into more generic lenses and exterior probability structures. Any such probability measures, by definition, would have to be absolutely continuous with the unit lens, and hence would allow a density to be incorporated in the results presented thus far.

Another area of potential interest could be to expand the the set up to attributes with multiple poles instead of just one, poles with higher orders. In physical terms they would be instrumental to understand many-body interactions at quantum scale with strong gravitational backdrop. 

\bibliography{Meromorphic_Variance_Inequality.bib}
\bibliographystyle{unsrt}
\end{document}